\newtheorem{theorem}{Theorem}[section]
\newtheorem{proposition}[theorem]{Proposition}
\newtheorem{corollary}[theorem]{Corollary}
\newtheorem{lemma}[theorem]{Lemma}
\theoremstyle{definition}
\newtheorem{definition}[theorem]{Definition}
\newtheorem{remark}[theorem]{Remark}
\newtheorem{example}[theorem]{Example}
\title{Generalized evolution semigroups and general dichotomies}
\author[N. Lupa]{Nicolae Lupa}
\address{N. Lupa, Department of Mathematics, Politehnica University of Timi\c soara,
Pia\c ta Victoriei 2, 300006 Timi\c soara, Romania}
\email{nicolae.lupa@upt.ro}
\author[L. H. Popescu]{Liviu Horia Popescu}
\address{L. H. Popescu, Department of Mathematics and Informatics, Faculty of Sciences,
University of Oradea, Universit\u a\c tii St. 1, 410087 Oradea, Romania}
\email{lpopescu2002@yahoo.com}
\subjclass[2010]{34D09, 34G10, 47D06}
\keywords{Real semiflows, evolution families, evolution semigroups, hyperbolicity, dichotomies}
\begin{document}

\begin{abstract}
We introduce a special class of real semiflows, which is used to define a general type of evolution semigroups, associated to not necessarily exponentially bounded evolution families. Giving spectral characterizations of the corresponding generators, our results directly apply to a wide class of dichotomies, such as those with time-varying rate of change.
\end{abstract}

\maketitle

\section{Introduction}

It is known that if $A$ is a \emph{bounded} linear operator on a Banach space $X$, then the asymptotic behavior of solutions $x(t)=e^{tA}x_0$ of the autonomous equation
\begin{equation}\label{eq.aut}
\dot{x}=Ax
\end{equation}
is completely determined by the spectrum of $A$, $\sigma(A)$ (for much more information we refer the reader to  \cite[Chapter II]{Da} and \cite[Section I.3]{Eng}).
For instance, the differential equation \eqref{eq.aut} is  exponentially stable (i.e., there exist constants $\nu>0$ and $N\geq 1$ such that $\| e^{tA} \|\leq Ne^{-\nu t}$ for every $t\geq 0$) if and only if the spectrum $\sigma(e^{tA})$ lies in the open unit disk $\mathbb{D}=\{\lambda\in\mathbb{C}:\,|\lambda|<1\}$ or, equivalently,  $\sigma(A)$ is contained in the open left half of the complex plane
(see \cite[Theorem I.3.14]{Eng}). Consequently, the following statements are equivalent:
\begin{itemize}
\item the differential equation (1) admits an exponential dichotomy on $X$, i.e., there exists a direct decomposition $X=X_s\oplus X_u$, into $A$-invariant closed subspaces $X_s$  and  $X_u$, such that there are constants $\nu>0$, $N\geq 1$ satisfying
$$ \| e^{tA}x \| \leq N e^{-\nu t} \| x\|, \; x\in X_s,$$
and
$$ \| e^{tA}x \| \geq \frac{1}{N} e^{\,\,\nu t} \| x\|, \; x\in X_u,$$
for all $t\geq 0$;
\item the semigroup $\left\{ e^{tA} \right\}_{t\geq 0}$ is hyperbolic, i.e., $\sigma(e^{tA})\cap \mathbb{T}=\emptyset$
for some (and hence all)
$t>0$, where $\mathbb{T}$ denotes the unit circle on $\mathbb{C}$;
\item the spectrum $\sigma(A)$ does not intersect the imaginary axis, $\sigma(A)\cap i\mathbb{R}=\emptyset$.
\end{itemize}

The key to the proof of the above results lies in the spectral mapping property
$$\sigma(e^{tA})=e^{t\sigma(A)},  \; t\geq 0,$$
which in this case is a simple consequence of Dunford's  functional calculus \cite[Theorem 2.9]{Du}.

For a general $C_0$-semigroup $\left\{ S_t \right\}_{t\geq 0}$ on $X$, generated by an \emph{unbounded} linear operator $A$, the situation is more complex.
We recall that a  \emph{$C_{0}$-semigroup} on the Banach space $X$ is a family $\left\{ S_t\right\}_{t\geq 0}$ of bounded linear operators on $X$ such that the following properties are satisfied:
\begin{itemize}
\item $S_0=\mathrm{Id}$ (the identity on $X$);
\item $S_t \,S_\tau=S_{t+\tau}, \text{ for }t,\tau\geq 0;$
\item $S_t x\to x$ in $X$ as $t\to 0^+$ for all $x\in X$.
\end{itemize}
The  \emph{generator} of a $C_{0}$-semigroup $\left\{ S_t\right\}_{t\geq 0}$ is the linear operator $A$, with domain $D(A)$, defined by
$$D(A)=\left\{ x\in X:\, \lim\limits_{t\to 0^+} \frac{S(t)x-x}{t} \text{ exists }\right\}$$
and
$$Ax= \lim\limits_{t\to 0^+} \frac{S(t)x-x}{t},\, x\in D(A).$$
We refer the reader to the monograph of Engel and Nagel \cite{Eng} for a brief history of $C_0$-semigroups (see also \cite{Na.Rha} for a short and very nice  presentation of semigroups and their recent applications).

For any $C_0$-semigroup, a spectral inclusion is always valid (see, for instance, Theorem 2.3 in \cite[Chapter 2]{Paz}):
$$\sigma(S_t)\supseteq e^{t\sigma(A)}, \, t\geq 0.$$
On the other hand, if we take a $C_0$-semigroup that cannot be extended to a group (e.g., the left translation semigroup on the Banach space of all continuous functions on $\mathbb{R}_+$ vanishing at infinity), then $0\in \sigma(S_t)$ for all $t>0$, while evidently $0$ does not belong to $e^{t\sigma(A)}$. Hence, to obtain a spectral mapping property,
we must exclude $0$ from the spectrum of $S_t$.
Even so, the \emph{spectral mapping theorem}
\begin{equation*}\label{eq.smt}
\sigma(S_t)\setminus\{0\}=e^{t\sigma(A)}, \, t\geq 0,
\end{equation*}
generally fails (see  \cite[Section 2.1.5]{Ch.La.1999} or \cite[pp. 270-275]{Eng}). However, it is valid for many classes of $C_0$-semigroups, such as eventually norm continuous semigroups  (see \cite[Paragraph IV.3.10]{Eng} or \cite[Theorem 2.3.2]{Nee.B}) and, consequently (see Diagram (4.26) in \cite[p. 119]{Eng}), eventually compact semigroups, eventually differentiable semigroups, analytic semigroups, uniformly continuous semigroups \cite[Corollary IV.3.12]{Eng}. Furthermore, some spectral mapping formula
holds for the point and residual spectrum  (see  \cite[Theorems 2.1.2,  2.1.3]{Nee.B}).
Thus, the failure of the spectral mapping theorem is determined by the approximate point spectrum.
For a  detailed analysis of the spectral mapping theorem we refer the reader to \cite[Section IV.3]{Eng} or \cite[Chapter 2]{Nee.B}.

In order to overcome the failure of the spectral mapping theorem,  Latushkin and Montgomery-Smith \cite{Lat1-1,Lat1} proved that
the hyperbolicity of  $\left\{ S_t \right\}_{t\geq 0}$  can be completely determined by the generator $G$ of a $C_0$-semigroup $\left\{ T_t \right\}_{t\geq 0}$, defined by
\begin{equation*}\label{es.aut}
(T_tu)(s)=S_t u(t-s),
\end{equation*}
on certain \emph{super-space} $E(X)$ of functions $u:\mathbb{R}\to X$. This semigroup is called the \emph{evolution semigroup} associated to $\left\{ S_t \right\}_{t\geq 0}$.
For instance, if  $E(X)$ is one of the spaces  $C_0(\mathbb{R},X)$ or $L^p(\mathbb{R},X)$, for $1\leq p<\infty$, then the hyperbolicity of $\left\{ S_t \right\}_{t\geq 0}$ on $X$ is equivalent to the  hyperbolicity of $\left\{ T_t \right\}_{t\geq 0}$ on $E(X)$ or, equivalently, to the invertibility of $G$ (see \cite[Theorem 2.5]{Lat1} or \cite[Theorem 2.39]{Ch.La.1999}).
Moreover, the evolution semigroup $\left\{ T_t \right\}_{t\geq 0}$ always satisfies the spectral mapping theorem in $E(X)$ even if the underlying semigroup $\left\{ S_t \right\}_{t\geq 0}$ does not have this property on $X$ \cite[Corollary 2.40]{Ch.La.1999}.

For a non-autonomous differential equation $\dot{x}=A(t)x$ the situation is much more difficult then the autonomous case.
It is known that if the associated  Cauchy problem
\begin{equation*}\label{eq.Cauchy}
\quad\left\{\begin{array}{ll}
\dot{x}(t)=A(t)x(t),\;t\geq{s},\\
x(s)=x_s,
\end{array}\right.
\end{equation*}
is well-posed, then one can define an evolution family $\left\{U(t,s)\right\}_{t\geq s}$ on $X$ such that $x(t)=U(t,s)x(s)$ for $t\geq s$ (for more details on well-posed non-autonomous Cauchy problems we refer the reader to \cite{Na.Ni} and the references therein).

Let now $\mathcal{U}=\left\{U(t,s)\right\}_{t\geq s}$ be an {evolution family} on $X$ (not necessarily associated to a non-autonomous differential equation) such that there exist constants $\alpha>0$ and $K\geq 1$ satisfying
\begin{equation}\label{eq.ueg}
\| U(t,s)\|\leq K e^{\alpha(t-s)}, \text{ for } t\geq s,
\end{equation}
that is $\mathcal{U}$ is  (uniformly) \emph{exponentially bounded}. Then, one can define a $C_0$-semigroup $\left\{  T_{t} \right\}_{t\geq 0}$ on $C_0(\mathbb{R},X)$ or $L^p(\mathbb{R},X)$, for $1\leq p<\infty$, by
\begin{equation}
T_{t}u(s)=U(s,s-t)u(s-t), \label{eq.ces}%
\end{equation}
called the \emph{evolution semigroup} associated to the evolution family $\mathcal{U}$ (see \cite[Lemma VI.9.10]{Eng} or \cite[Proposition 3.11]{Ch.La.1999}), and denote its generator by $(G,D(G))$.

The theory of evolution (semi)groups associated to an evolution family has a long history going back to Howland \cite{How} and Lovelady \cite{Lov}.
Significant contributions to this theory are due to Evans \cite{Ev}, Neidhardt \cite{Nei},
Rau \cite{Rau1,Rau2}, Van Minh \cite{Min2,Min3}, Latushkin and Montgomery-Smith \cite{Lat1-1,Lat1}, Latushkin and Randolph \cite{Lat3},  R\"{a}biger and Schnaubelt \cite{Rab}, R\"{a}biger,  Rhandi, and Schnaubelt \cite{RRS}, and many others.

The spectra of the operators $T_t$ and the generator $G$ of the  evolution semigroup $\left\{  T_{t} \right\}_{t\geq 0}$ have some important symmetry properties:
\emph{$\sigma\left(  T_{t} \right)$ is rotationally invariant for each  $t>0$}, that is
$$
\lambda\sigma\left(  T_{t} \right)=\sigma\left(  T_{t} \right) \text{ for every $\lambda\in\mathbb{C}$ with $|\lambda | =1$},
$$
and \emph{the spectrum $\sigma(G)$ is invariant under translations along the imaginary axis}, respectively, i.e.,  $$\sigma(G)=\sigma(G)+i\mathbb{R}.$$
Furthermore, \emph{the evolution semigroup is hyperbolic if and only if its generator is invertible} (see \cite[Theorem 3.1]{Lat1} or \cite[Theorem 3.13]{Ch.La.1999}). In particular, \emph{the evolution semigroup satisfies the spectral mapping theorem}.

On the other hand, the hyperbolicity of the evolution semigroup characterizes the exponential dichotomy of the underlying evolution family (see Theorem \ref{th.ued} below),  and thus  the evolution semigroups method provides a strong tool to study the exponential dichotomy of evolution families. More precisely, the evolution family $\mathcal{U}$  admits a (uniform) \emph{exponential dichotomy} if:
\begin{enumerate}
\item[(a)] there exist projections $P(t):X\to X$, $t\in \mathbb{R}$, and write $Q(t)=\mathrm{Id}-P(t)$, \emph{compatible with $\mathcal{U}$}, that is
$P(t)U(t,s)=U(t,s)P(s)$
and the restriction $U_Q(t,s):Q(s)X\to Q(t)X$ of $U(t,s)$ is invertible, for all $t\geq s$;
\item[(b)] there exist constants $\nu>0$, $N\geq 1$ such that
$$\| U(t,s)P(s)\| \leq Ne^{-\nu (t-s) }  \text{ and }  \| U_Q(s,t)Q(t)\| \leq Ne^{-\nu (t-s) }, \; t\geq s,$$
where $U_Q(s,t)=U_Q(t,s)^{-1}$.
\end{enumerate}

The above mentioned result is now stated in the following theorem (see, for instance, \cite[Theorem 3.17 and Theorem 4.25]{Ch.La.1999} or \cite[Theorem VI.9.18]{Eng}).

\begin{theorem}\label{th.ued}
Let $\mathcal{U}=\left\{U(t,s)\right\}_{t\geq s}$ be an exponentially bounded evolution family on a Banach space $X$, let $\left\{  T_{t} \right\}_{t\geq 0}$ be the associated evolution semigroup on $E(X)$ defined by \eqref{eq.ces}, where $E(X)$ is $C_0(\mathbb{R},X)$ or $L^p(\mathbb{R},X)$, for  $1\leq p<\infty$, and denote $G$  its generator. The following assertions are equivalent:
\begin{enumerate}
\item $\mathcal{U}$ admits an exponential dichotomy on $X$;
\item $\left\{  T_{t} \right\}_{t\geq 0}$ is hyperbolic on $E(X)$;
\item $\sigma(G) \cap i\mathbb{R}=\emptyset$.
In this case, $G$ is invertible and its inverse is given by
\begin{equation}\label{eq.green1}
(G^{-1}f)(t)=-\int_\mathbb{R} \Gamma(t,s)f(s)\, ds, \text{ for all } f\in E(X) \text{ and } t\in\mathbb{R},
\end{equation}
where
$$
\Gamma(t,s)=\begin{cases}
\;\;\phantom{-}U(t,s)P(s), & t>s,\\
-U_Q(t,s)Q(s), & t<s,
\end{cases}
$$
is the Green function associated to the exponential dichotomous evolution family $\mathcal{U}$.
\end{enumerate}
\end{theorem}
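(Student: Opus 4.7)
The plan is to prove the cycle $(1) \Rightarrow (2) \Rightarrow (3) \Rightarrow (1)$ and then verify the Green function formula separately.

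For $(1) \Rightarrow (2)$, I would lift the projections $\{P(t)\}$ on $X$ to a bounded projection $\mathcal{P}$ on $E(X)$ by pointwise multiplication, $(\mathcal{P}u)(s) := P(s)u(s)$. The uniform bound $\sup_t \|P(t)\| \le N$ is immediate from evaluating the dichotomy estimates at $t=s$. The intertwining $P(t)U(t,s) = U(t,s)P(s)$ gives $\mathcal{P}T_t = T_t\mathcal{P}$, and the dichotomy inequalities translate into the norm decay $\|T_t|_{\mathrm{Im}\,\mathcal{P}}\| \le Ne^{-\nu t}$ together with invertibility of $T_t|_{\mathrm{Ker}\,\mathcal{P}}$ with inverse of norm $\le Ne^{-\nu t}$, which is exactly hyperbolicity.

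For $(2) \Leftrightarrow (3)$, I would combine the two symmetries recalled in the introduction with the spectral mapping theorem $\sigma(T_t)\setminus\{0\} = e^{t\sigma(G)}$, which holds unconditionally for evolution semigroups. The translation invariance $\sigma(G) = \sigma(G) + i\mathbb{R}$ forces $\sigma(G) \cap i\mathbb{R} = \emptyset$ to be equivalent to $0 \notin \sigma(G)$, so $(3)$ already yields the invertibility of $G$.

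The main obstacle is $(3) \Rightarrow (1)$. Starting from $G^{-1}$ bounded, I would take $\mathcal{P}$ to be the Riesz spectral projection of the hyperbolic semigroup $\{T_t\}$ and argue that $\mathcal{P}$ is itself a multiplication operator $(\mathcal{P}u)(s) = P(s)u(s)$ for some uniformly bounded family of projections $P(s)$ on $X$. The crux is showing that $\mathcal{P}$ commutes with every multiplication operator $M_\phi$ for $\phi \in C_b(\mathbb{R})$; this can be extracted from the intertwining $T_t M_\phi = M_{\phi(\cdot - t)} T_t$ together with the contour-integral representation of $\mathcal{P}$ and the $i\mathbb{R}$-invariance of $\sigma(G)$. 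Once $\mathcal{P}$ is identified as a multiplier, the compatibility $P(t)U(t,s) = U(t,s)P(s)$ reads off from $\mathcal{P}T_t = T_t\mathcal{P}$ pointwise, and the uniform dichotomy constants are recovered by testing the exponential decay estimates of $\{T_t\}$ on $\mathrm{Im}\,\mathcal{P}$ and $\mathrm{Ker}\,\mathcal{P}$ against functions $u(s) = \chi(s)x$ with $\chi$ concentrated near a single time.

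The Green function formula \eqref{eq.green1} I would verify by direct substitution: the exponential decay of $\Gamma$ away from the diagonal makes the integral a bounded operator on $E(X)$ by standard convolution-type estimates, and applying $G$ to the right-hand side returns $f$ because the jump $P(s) + Q(s) = \mathrm{Id}$ of $\Gamma$ across $t = s$ produces the source term after one differentiates $T_h$ applied to this expression at $h = 0$. The genuinely delicate point is the multiplier identification in $(3) \Rightarrow (1)$, where one must separate stable and unstable fibers coherently across all $s$ at once; the argument runs slightly differently in $L^p(\mathbb{R},X)$ (via Fubini-type arguments on $G^{-1}$) and in $C_0(\mathbb{R},X)$ (where strong continuity of $s \mapsto P(s)$ must be extracted first).
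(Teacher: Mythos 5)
This theorem is not proved in the paper at all: it is quoted as a known result, with the equivalences attributed to \cite[Theorem 3.17 and Theorem 4.25]{Ch.La.1999} and \cite[Theorem VI.9.18]{Eng}, and the Green-function formula \eqref{eq.green1} to Latushkin and Randolph \cite{Lat3}. So there is no in-paper proof to compare against; what you have written is a reconstruction of the standard argument from those references, and as an outline it is essentially correct: lifting the dichotomy projections to the multiplier $\mathcal{P}$ for $(1)\Rightarrow(2)$, using the rotation/translation invariance of the spectra for $(2)\Leftrightarrow(3)$, identifying the Riesz projection of a hyperbolic $\{T_t\}$ as a multiplication operator for the converse, and verifying \eqref{eq.green1} through the mild-solution characterization of $Gu=-f$ (the analogue of Proposition~\ref{lem.int} with $\mu=\mathrm{id}$).

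Two points deserve caution. First, you invoke the spectral mapping theorem $\sigma(T_t)\setminus\{0\}=e^{t\sigma(G)}$ as an unconditional black box to get $(2)\Leftrightarrow(3)$. That theorem is itself one of the deepest facts about evolution semigroups, and in the standard development (e.g.\ \cite[Theorem 3.13 and Corollary 3.14]{Ch.La.1999}) it is obtained essentially \emph{from} the equivalence ``$\{T_t\}$ hyperbolic $\Leftrightarrow$ $G$ invertible,'' which is proved directly using the rotation invariance of $\sigma(T_t)$ and the translation invariance of $\sigma(G)$. A careful write-up must either prove the SMT independently or reorganize the logic to avoid circularity; the cheap route is to prove $(2)\Leftrightarrow(3)$ directly from the two invariances plus the general spectral inclusion $e^{t\sigma(G)}\subseteq\sigma(T_t)$, without the full SMT. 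Second, the step you correctly flag as the crux --- showing the Riesz projection commutes with every $M_\phi$ --- is sketched in its vaguest form (``extracted from the intertwining together with the contour-integral representation''). The argument that actually closes this is the module argument: one characterizes $\mathrm{Range}(\mathcal{P})=\{u:\,T_tu\to 0\}$ dynamically, observes it is a $C_b(\mathbb{R})$-submodule, and kills $\mathcal{P}(\beta\mathcal{Q}u)$ using the backward decay on $\mathrm{Ker}\,\mathcal{P}$; this is precisely what the present paper does for its generalized semigroup in Proposition~\ref{lemma6}, and you should model the missing details on that computation rather than on a contour-integral manipulation.
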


To our knowledge, formula \eqref{eq.green1} was first proved in this context by  Latushkin and Randolph \cite{Lat3}.

R\"{a}biger and Schnaubelt extended Theorem \ref{th.ued} to a large class of $X$-valued function spaces, including in particular $C_0(\mathbb{R},X)$ and $L^p(\mathbb{R},X)$, for $1\leq p <\infty$  \cite{Rab}. For an excellent survey on the theory of evolution semigroups and their applications we refer the reader  to the monograph of Chicone and Latushkin \cite{Ch.La.1999}, where the authors also developed a systematic theory of evolution semigroups induced  by a cocycle
over a locally compact metric space acting on Banach fibers.
More recently, the evolution semigroups method was extended to nonuniform behavior (see, for instance, \cite{BPV0,BPV1,BV1,Lu.Po.JDEA,Lu.Po.Sem}).

For non-autonomous differential equations, the classical concept of exponential dichotomy may look as too restrictive, therefore it is important to search for more general behavior, for instance considering dichotomies given by general growth rates, an approach initiated to the best of our  knowledge by Naulin and Pinto \cite{Na.Pi.94, Na.Pi}. In particular, the contraction and expansion can be of the form $e^{\gamma\int_0^t \rho(\xi) d\xi}$ (see \cite{Jiang,Jiang2,Martin,Mul}) or  $e^{\gamma\mu(t)}$ \cite{Ba.Va.1}, including the usual exponential behavior and polynomial behavior \cite{Ba.Va.4,Be.Si} as special cases. These types of asymptotic behavior can occur in the critical situations when all Lyapunov exponents are infinite ($\pm\infty$) or they are all  zero \cite{Ba.Va.1}. We also emphasize that some of these types of dichotomy were connected to the important problem of linearization of dynamical systems, which is of a great interest for geometry.

In this paper, we generalize the classical concept of the evolution semigroup associated to an evolution family, hereby called the \emph{generalized evolution semigroup}, replacing the right translation semiflow $\varphi(t,s)=s-t, \;t\geq 0, \, s\in\mathbb{R},$ in formula \eqref{eq.ces} with a general real semiflow.
We  point out  that in our setting the evolution family might not be exponentially bounded.

The main purpose of generalizing the concept of evolution semigroup is to study a wide class of dichotomies presented above.
We prove that under some suitable conditions, the evolution semigroup we consider is similar to the classical evolution semigroup defined by \eqref{eq.ces}. This enables us to apply well established results to completely characterize a general type of exponential dichotomy of an evolution family in terms of spectral conditions imposed to the generator of the corresponding generalized evolution semigroup.
We restrict our study to the case of the space of all continuous functions vanishing at infinity.

Present paper is organized as follows.
In Section \ref{sec.2} we introduce the notion of a \emph{real semiflow}. We think
the main result here is the one in Theorem \ref{th4}, that completely characterizes the non-degenerated real semiflows.
The third section deals with what we call as \emph{generalized evolution semigroups}, associated to real semiflows.
In Theorem \ref{th1} we prove that the generalized evolution semigroups are $C_{0}$-semigroups on $C_0(\mathbb{R},X)$. Furthermore, in subsection \ref{p.similar} we show that the generalized evolution semigroups of a certain type are similar to the classical evolution semigroup, thus satisfying the spectral mapping theorem.
In the last section  we analyze a wide class of exponential dichotomies with
time-varying rates of change, by using the generalized evolution semigroups previously constructed.

\section{Real semiflows}\label{sec.2}

In this section, we introduce a special class of real semiflows which will be used to generalize the classical notion of the evolution semigroup associated to an evolution family. To the best of our knowledge, the results in this section are completely new.

As usually, $\mathbb{R}$ denotes the field of real numbers and write $\mathbb{R}_+=[0,\infty)$.
\begin{definition}\label{def4}\rm
Let $\Theta$ be a locally compact metric space.
A continuous mapping $\varphi:\mathbb{R}_+\times \Theta\to \Theta$  is called a
\emph{semiflow} on $\Theta$ if the following properties hold:
\begin{enumerate}
\item[(i)]   $\varphi_0(\theta)=\theta$, for every $\theta\in\Theta$,
\item[(ii)]  $\varphi_{t}\circ\varphi_{\tau}=\varphi_{t+\tau}$, for all $t,\tau\geq 0$,
\end{enumerate}
where $\varphi_t(\theta)=\varphi(t,\theta)$, for $t\geq 0$ and $\theta\in\Theta$. In this case, the function $\varphi_t(\cdot)$ is  called \emph{the transition map} defined by $t$.

In particular, a semiflow $\varphi:\mathbb{R}_+\times \mathbb{R}\to \mathbb{R}$ (that is $\Theta=\mathbb{R}$) will be called a \emph{real semiflow} if in addition the following inequality holds:
\begin{equation}\label{eq.ineq.def}
\varphi_{t}(s)\leq s, \text{ for all  $t\geq 0$ and $s\in\mathbb{R}$.}
\end{equation}
\end{definition}

It is easy to observe that \eqref{eq.ineq.def} implies
\begin{equation}\label{eq.-oo}
\lim\limits_{s\to -\infty}\varphi_t(s)=-\infty, \text{ for every } t\geq 0.
\end{equation}

The \textit{orbit} of  $s\in\mathbb{R}$ is the set
$o\left(  s\right)=\left\{  \varphi_{t}\left(  s\right):\;t\geq0\right\}.$
If $o\left(  s\right)  =\left\{  s\right\}$, we say that the orbit
$o\left(s\right)$ is \emph{trivial}.

A classical example of real semiflow is \emph{the right translation semiflow}, defined by
\begin{equation}\label{eq.translation}
\varphi(t,s)=s-t, \text{ for } t\geq 0 \text{ and } s\in\mathbb{R},
\end{equation}
with the orbits $o(s)=(-\infty,s]$, $s\in\mathbb{R}$.

\begin{remark}\label{remark1}
For each $s\in\mathbb{R}$ the function
$\mathbb{R}_+\ni t\mapsto\varphi_{t}\left(  s\right)\in\mathbb{R}$ is
decreasing, meanwhile for each $t\geq 0$ the function $\mathbb{R}\ni s\mapsto\varphi_{t}\left(  s\right)\in\mathbb{R}$ is increasing.
\end{remark}
\begin{proof}
Indeed, for each fixed $s\in\mathbb{R}$, if
$t_1,t_2\geq 0$ with  $t_{1}<t_{2}$, one has
$$\varphi_{t_{2}}\left(  s\right)=\varphi_{t_{2}-t_{1}}\left(  \varphi_{t_{1}}\left(  s\right)  \right)\leq\varphi_{t_{1}}\left(  s\right),$$
and thus the function $t\mapsto\varphi_{t}\left(  s\right)$ is decreasing. For the second statement, assume that there exist
$s_1,s_2\in\mathbb{R}$ with $s_{1}<s_{2}$  such that
$\varphi_{t_{0}}\left(  s_{1}\right) >\varphi_{t_{0}}\left(  s_{2}\right)$ for some $t_0>0$. Then,
$$\varphi_{t_0}(s_2)<\varphi_{t_{0}}\left(  s_{1}\right)  \leq s_{1}< s_{2}=\lim\limits_{t\to 0_+}\varphi_{t}\left(  s_{2}\right),$$  and hence there exists
$t_{1}\in(0,t_0)$ such that $\varphi_{t_{1}}\left(  s_{2}\right)  =s_{1}$. This yields
$$\varphi_{t_{0}}\left(  s_{2}\right)  <\varphi_{t_{0}}\left(  \varphi_{t_{1}}\left(  s_{2}\right)  \right)=\varphi_{t_{0}+t_{1}}\left(
s_{2}\right),$$ which contradicts the first statement above.
\end{proof}

\medskip

For  any fixed $s\in\mathbb{R}$, we  set $$\omega\left(  s\right)  =\lim\limits_{t\rightarrow\infty}\varphi_{t}\left(  s\right).$$

Notice that $\omega(s)$ is a real number or $\omega(s)=-\infty$, and
\begin{equation}\label{eq.w}
\omega(s)\leq \varphi_t(s)\leq s, \text{ for all }  t\geq 0 \text{ and } s\in\mathbb{R}.
\end{equation}
Moreover,
\begin{equation}\label{w-monoton}
s_1<s_2 \text{ in } \mathbb{R} \;\Rightarrow \; \omega(s_1)\leq \omega(s_2) \text{ in } \mathbb{R}\cup \{-\infty\}.
\end{equation}
Hence, if  $\omega(s)=-\infty$ for some $s\in \mathbb{R}$, then $\omega(\tau)=-\infty$ for every $\tau\in (-\infty,s]$.
On the other hand, if there exists $s\in\mathbb{R}$ such that $\omega(s)\in\mathbb{R}$, then $\omega(\tau)\in \mathbb{R}$ for every $\tau\geq s$, thus $\omega:[s,\infty)\to\mathbb{R}$ is well-defined. Furthermore,  \emph{$\omega$ is right continuous at $s$}.
Indeed, since  $t\mapsto\varphi_{t}(s)$ is a decreasing function, we have
$$\omega(s)=\inf\limits_{t\geq 0} \varphi_t(s).$$
It is well-known that the pointwise infimum of an arbitrary collection of upper semi-continuous functions is upper semi-continuous. Thus,
\begin{equation*}\label{upper-semicontinuous}
\limsup\limits_{\tau\to s^+} \,\omega(\tau)\leq \omega(s).
\end{equation*}
On the other hand, by \eqref{w-monoton} we have
$$\omega(s)\leq \lim\limits_{\tau\to s^+} \omega(\tau)=\limsup\limits_{\tau\to s^+}\, \omega(\tau).$$
Therefore,
$\lim\limits_{\tau\to s^+} \omega(\tau)=\omega(s),$
that is $\omega$ is right continuous at $s$.

\medskip

Now, let us observe that letting $\tau\to\infty$ in the relation
$$\varphi_\tau(\varphi_t(s))=\varphi_{\tau+t}(s)=\varphi_t(\varphi_\tau(s)),$$
we get
\begin{equation}\label{eq.prop.a}
\omega(\varphi_t(s))=\omega(s),\; t\geq 0,
\end{equation}
and, in addition, using the continuity of the function $\varphi_t(\cdot)$, we obtain
\begin{equation}\label{eq.prop.b}
\varphi_t(\omega(s))=\omega(s),\; t\geq 0.
\end{equation}
On the other hand, letting  $t\to\infty$ in the above relation, it results that
\begin{equation}\label{eq.prop.c}
\omega(\omega(s))=\omega(s).
\end{equation}
We stress that relations \eqref{eq.prop.a}--\eqref{eq.prop.c} are valid provided that $\omega(s)\in\mathbb{R}$.
In particular, \eqref{eq.prop.a} shows  that the restriction of $\omega$ on the orbit
$o(s)$ is constant for each fixed $s\in\mathbb{R}$ with $\omega(s)\in\mathbb{R}$. In fact, $\omega(\tau)=\omega(s)$ for every $\tau\in o(s)$.

\begin{definition}\label{ndg} \rm
A \emph{non-degenerate semiflow} is a real semiflow without trivial orbits and, if contrary, we call it \emph{degenerate}.
\end{definition}

For instance, the right translation semiflow, defined in \eqref{eq.translation}, is a non-degenerate  semiflow.

The next result gives some simple equivalent conditions for a real semiflow to be non-degenerate.

\begin{lemma}\label{lemma2}
Let $\varphi:\mathbb{R}_+\times \mathbb{R}\to \mathbb{R}$ be a real semiflow. The following statements are equivalent:
\begin{enumerate}
\item[(i)]    $\varphi$ is non-degenerate;
\item[(ii)]   $\lim\limits_{t\rightarrow\infty}\varphi_{t}(s)  =-\infty,$ for every $s\in\mathbb{R}$;
\item[(iii)]   $\varphi_{t}\left(  s\right)  <s$, for all  $t>0$ and $s\in\mathbb{R}$;
\item[(iv)]   For each $s\in\mathbb{R}$ there exists $t_{s}>0$ such that $\varphi_{t_{s}}(s) \neq s$.
\end{enumerate}
\end{lemma}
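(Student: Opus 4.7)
The plan is to establish the chain (i) $\Leftrightarrow$ (iv) $\Rightarrow$ (iii) $\Rightarrow$ (ii) $\Rightarrow$ (i), with only one step requiring real work.

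First, (i) $\Leftrightarrow$ (iv) is just the definition of non-degeneracy unpacked against \eqref{eq.ineq.def}: the orbit $o(s)$ equals $\{s\}$ iff $\varphi_t(s)=s$ for every $t\geq 0$, so the failure of triviality for every $s$ is exactly the existence of some $t_s>0$ with $\varphi_{t_s}(s)\neq s$. The implication (ii) $\Rightarrow$ (i) is equally immediate, since $\varphi_t(s)\to -\infty$ forces $\varphi_t(s)<s$ for all sufficiently large $t$, so $o(s)\neq\{s\}$. Similarly (iii) $\Rightarrow$ (iv) is trivial (pick any $t_s>0$).

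The heart of the lemma is (iv) $\Rightarrow$ (iii). Fix $s\in\mathbb{R}$ and, arguing by contradiction, suppose there exists $t_0>0$ with $\varphi_{t_0}(s)=s$. Iterating the semiflow identity yields $\varphi_{nt_0}(s)=s$ for every $n\in\mathbb{N}$. Given any $t>0$, choose $n$ large enough that $nt_0\geq t$; the decreasing monotonicity of $t\mapsto\varphi_t(s)$ from Remark \ref{remark1} then sandwiches $s=\varphi_{nt_0}(s)\leq\varphi_t(s)\leq s$, so $\varphi_t(s)=s$ for all $t\geq 0$. This contradicts (iv) applied at $s$. Combining with \eqref{eq.ineq.def}, we conclude $\varphi_t(s)<s$ for every $t>0$ and every $s\in\mathbb{R}$, which is exactly (iii).

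It remains to verify (iii) $\Rightarrow$ (ii). Assume for contradiction that $\omega(s)\in\mathbb{R}$ for some $s$. The invariance identity \eqref{eq.prop.b} gives $\varphi_t(\omega(s))=\omega(s)$ for every $t\geq 0$, which directly contradicts (iii) at the point $\omega(s)$; hence $\omega(s)=-\infty$ for all $s$. The main obstacle in the whole argument is the iteration-plus-monotonicity step inside (iv) $\Rightarrow$ (iii): once a single witness $t_s$ has been promoted to \emph{every} positive $t$, the remaining implications are one-line consequences of Remark \ref{remark1} and the identity \eqref{eq.prop.b}.
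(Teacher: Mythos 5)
Your proof is correct and rests on essentially the same two ingredients as the paper's — iterating a fixed point of $\varphi_{t_0}$ to get $\varphi_{nt_0}(s)=s$ for all $n$, and invoking \eqref{eq.prop.b} to see that a finite $\omega(s)$ is a fixed point of every transition map — merely traversing the cycle of implications in the opposite direction ((iv) $\Rightarrow$ (iii) $\Rightarrow$ (ii) $\Rightarrow$ (i) instead of the paper's (i) $\Rightarrow$ (ii) $\Rightarrow$ (iii) $\Rightarrow$ (iv)). The only extra work this reversal costs you is the monotonicity sandwich inside (iv) $\Rightarrow$ (iii), which you carry out correctly via Remark \ref{remark1}.
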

\begin{proof}
(i) $\Rightarrow$ (ii)  If there exists $s_0\in\mathbb{R}$ such that  $\lim\limits_{t\rightarrow\infty}\varphi_{t}(s_0)$ is finite, which is equivalent to $\omega(s_0)\in\mathbb{R},$ then \eqref{eq.prop.b} shows  that
the orbit $o\left(  \omega\left(s_0\right)  \right)$ is trivial and hence $\varphi$ is degenerate, which contradicts (i).

(ii) $\Rightarrow$ (iii)
Assume that there exists $t_{0}>0$
and $s_{0}\in\mathbb{R}$ such that  $\varphi_{t_{0}}\left( s_{0}\right)=s_{0}.$ Then,
$$\varphi_{nt_{0}}\left(  s_{0}\right)  =s_{0}, \text{ for all }n\in\mathbb{N},$$
and letting $n\to \infty$ we get a contradiction.
Implications (iii) $\Rightarrow$ (iv) and (iv) $\Rightarrow$ (i) are trivial.
\end{proof}

Using the equivalence (i) $\Leftrightarrow$ (iii) in the previous lemma and the same type of arguments as in Remark \ref{remark1}, one can easily prove that \emph{if $\varphi:\mathbb{R}_+\times \mathbb{R}\to \mathbb{R}$ is a non-degenerate semiflow, then the function  $t\mapsto\varphi_{t}\left(  s\right)$ is strictly decreasing for each $s\in\mathbb{R}$ and the function
$s\mapsto\varphi_{t}\left(  s\right)$ is strictly increasing for each $t\geq 0$}.

\medskip

The next result is of a significant importance.  It states that any
non-degenerate semiflow is generated by a continuous, strictly increasing real function.

\begin{theorem}\label{th4}
A mapping $\varphi:\mathbb{R}_+\times \mathbb{R}\to \mathbb{R}$ is a non-degenerate semiflow if and only if there exists a continuous, strictly increasing function $\mu:\mathbb{R}\to\mathbb{R}$
with  $\lim\limits_{s\rightarrow-\infty}\mu (s) =-\infty$ and
\begin{equation}
\varphi_{t}(s)  =\mu^{-1}(\mu(s)-t), \text{ for all } t\geq 0 \text{ and } s\in\mathbb{R}. \label{eq5}%
\end{equation}
\end{theorem}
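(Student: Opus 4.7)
The plan is to handle the two implications separately, with the easy (sufficiency) direction serving as a sanity check for the structural form of $\mu$, and the hard (necessity) direction building $\mu$ by choosing a reference point and parametrizing orbits.

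For the implication $(\Leftarrow)$, given $\mu$ with the stated properties, I would first observe that the image $\mu(\mathbb{R})$ is an interval of the form $(-\infty,\mu^*)$ or $(-\infty,\mu^*]$ with $\mu^*\in(-\infty,+\infty]$, so that $\mu^{-1}(\mu(s)-t)$ is always defined because $\mu(s)-t\leq\mu(s)\in\mu(\mathbb{R})$. Then $\varphi_0(s)=s$, the cocycle property $\varphi_{t+\tau}=\varphi_t\circ\varphi_\tau$, and joint continuity in $(t,s)$ follow by direct substitution using continuity of $\mu$ and $\mu^{-1}$. The inequality $\varphi_t(s)\leq s$ comes from $\mu(s)-t\leq\mu(s)$ plus monotonicity of $\mu^{-1}$, and non-degeneracy follows from criterion (iii) of Lemma~\ref{lemma2} since $\mu(s)-t<\mu(s)$ for $t>0$.

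For $(\Rightarrow)$, I fix an arbitrary reference point $s_0\in\mathbb{R}$ and build $\mu$ via the orbit of $s_0$ and, where necessary, via backward orbits that land on $s_0$. By Lemma~\ref{lemma2}(ii), Remark~\ref{remark1}, and the strict-monotonicity refinement stated after Lemma~\ref{lemma2}, the map $t\mapsto\varphi_t(s_0)$ is a continuous, strictly decreasing bijection of $[0,\infty)$ onto $(-\infty,s_0]$. Let $\psi:(-\infty,s_0]\to[0,\infty)$ denote its inverse, and set $\mu(s):=-\psi(s)$ for $s\leq s_0$. For $s>s_0$, the analogous statement applied to the orbit of $s$ shows that there is a unique $\tau(s)>0$ with $\varphi_{\tau(s)}(s)=s_0$, and I set $\mu(s):=\tau(s)$. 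Thus $\mu(s_0)=0$, $\lim_{s\to-\infty}\mu(s)=-\infty$ (since $\varphi_t(s_0)\to-\infty$), and strict increase on each half is immediate from the strict monotonicity of $\varphi$; on the crossover $s_0\leq s_1<s_2$ I compare $\varphi_{\tau(s_2)}(s_1)<\varphi_{\tau(s_2)}(s_2)=s_0=\varphi_{\tau(s_1)}(s_1)$ and use strict decrease in $t$ to conclude $\tau(s_1)<\tau(s_2)$.

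Next I would verify continuity of $\mu$. On $(-\infty,s_0]$ it is automatic because $\psi$ inverts a continuous strictly monotone function; on $(s_0,\infty)$ I use a compactness argument on limit points of $\{\tau(s_n)\}$ together with uniqueness of the crossing time. Continuity at $s_0$ itself requires a separate bound: if $s_n\to s_0^+$ but $\tau(s_n)\geq\varepsilon$ along a subsequence, then $s_0=\varphi_{\tau(s_n)}(s_n)\leq\varphi_\varepsilon(s_n)\to\varphi_\varepsilon(s_0)<s_0$, a contradiction. The identity $\mu(\varphi_t(s))=\mu(s)-t$ is then proved by splitting into the cases $s<s_0$, $s=s_0$, and $s>s_0$ (in the last case further split by whether $t<\tau(s)$ or $t\geq\tau(s)$), each reduction being a one-line application of the cocycle law and the definition of $\mu$. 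Inverting this identity using strict monotonicity of $\mu$ yields \eqref{eq5}.

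The main obstacle is the piecewise nature of the construction: the map $\mu$ is defined by distinct recipes for $s\leq s_0$ and $s>s_0$, so monotonicity, continuity, and the functional equation must each be checked at the junction $s_0$, where no single underlying orbit is available. The crossover comparison for monotonicity and the upper-semicontinuity-style argument for right-continuity at $s_0$ are the two places where I would spend the most care; everything else is routine use of Remark~\ref{remark1} and Lemma~\ref{lemma2}.
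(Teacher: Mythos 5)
Your proposal is correct and follows essentially the same route as the paper: both define $\mu$ via the unique transit time to a fixed reference point (the paper uses $0$, you use an arbitrary $s_0$), prove strict monotonicity and continuity of $\mu$, and then invert the identity $\mu(\varphi_t(s))=\mu(s)-t$. The only organizational difference is that the paper first establishes the additive identity $t(s,\tau)+t(\tau,\eta)=t(s,\eta)$, which makes the junction checks you perform case by case follow at once from $t(s,\tau)=\mu(s)-\mu(\tau)$.
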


\begin{proof}
Suppose that $\varphi$ is a non-degenerate semiflow. Let $s,\tau\in\mathbb{R}$ with $s>\tau$.  According to Lemma \ref{lemma2} we have
$$-\infty =\lim\limits_{t\rightarrow\infty}\varphi_{t}\left(  s\right)<\tau<s=\varphi_0 (s),$$
and thus there exists a unique number
$t(s,\tau)>0$ such that
$\varphi_{t(s,\tau)}(s)  =\tau$.  On the other hand, since $\varphi_0(s)=s$, we can consider $t(s,\tau)=0$ whenever  $s=\tau$. Hence, $t(s,\tau)\geq 0$ is the unique number which satisfies the identity
\begin{equation}\label{eq.phi}
\varphi_{t(s,\tau)}(s)  =\tau, \text{ for } s\geq \tau.
\end{equation}

Let us first prove that
\begin{equation*}\label{eq.t}
t(s,\tau)+t(\tau,\eta)=t(s,\eta), \text{ for } s\geq \tau\geq \eta.
\end{equation*}
Indeed, for $s\geq \tau\geq \eta$  we have
\begin{align*}
\varphi_{t(s,\tau)+t(\tau,\eta)}(s)=\varphi_{t(\tau,\eta)}\left(\varphi_{t(s,\tau)}(s)\right)=\varphi_{t(\tau,\eta)}(\tau)=\eta=\varphi_{t(s,\eta)}(s).
\end{align*}

Define  a
function  $\mu:\mathbb{R}\to\mathbb{R}$, by
\begin{equation*}
\mu(s)  =
\begin{cases}
\phantom{-}t(s,0),& s\geq 0,\\
-t(0,s),& s<0.
\end{cases}
\end{equation*}
One can easily check that
\begin{equation}\label{eq.t.mu}
t(s,\tau)=\mu(s)-\mu(\tau), \text{ for } s\geq \tau.
\end{equation}

Since $t(s,\tau)>0$ whenever $s>\tau$,  the above identity yields that $\mu$ is  strictly increasing.
We now prove the continuity of  $\mu$. Let $s\in\mathbb{R}$ and choose a strictly increasing sequence $(s_n)\subset\mathbb{R}$ with $s_n<s$ and $s_n\to s$.
Set $t_n=\mu(s_n)$ and $t=\mu(s)$.
Since $\mu$ is a strictly increasing function,  we get that $(t_n)$ is  strictly increasing
and thus there exists $t'=\lim\limits_{n\to\infty} t_n\leq t.$
If $s>0$, then $s_n>0$ for sufficiently large $n\in\mathbb{N}$, hence
$t_n=t(s_n,0)$ and, by \eqref{eq.phi}, we have
$$\varphi_{t_n}(s_n)=\varphi_{t(s_n,0)}(s_n)=0.$$
On the other hand, $\varphi_{t_n}(s_n)\to \varphi_{t'}(s)$ and thus $\varphi_{t'}(s)=0$.  Since $t=t(s,0)$ is the unique number satisfying $\varphi_{t}(s)=0$,  then $t'=t$.
If
$s\leq 0$, then $s_n<0$ for every $n\in\mathbb{N}$,  consequently $t_n=-t(0,s_n)$ for  $n\in\mathbb{N}$. Then, we have
$$\varphi_{-t_n}(0)=\varphi_{t(0,s_n)}(0)=s_n.$$
This yields $\varphi_{-t'}(0)=s$. We also have
$\varphi_{-t}(0)=\varphi_{t(0,s)}(0)=s,$
which implies that $t=t'$.
All above considerations shows that  $\mu$ is left continuous at $s$. Similarly, one can prove that $\mu$ is right continuous, therefore it is a continuous function.

Observe  now that \eqref{eq.phi} and \eqref{eq.t.mu} imply
\begin{equation*}\label{eq.ts}
t(s,\tau)=\mu(s)-\mu(\varphi_{t(s,\tau)}(s)), \text{ for } s\geq \tau.
\end{equation*}
This yields
\begin{equation}\label{eq.mu}
t=\mu(s)-\mu(\varphi_t(s)), \text{ for all }  t\geq 0 \text{ and } s\in\mathbb{R}.
\end{equation}

Since $\mu$ is strictly increasing, there exists $l=\lim\limits_{s\to -\infty} \mu(s)\in \mathbb{R}\cup \{-\infty\}.$ If we assume that $l$ is finite, then  letting $t\to\infty$ in the identity
$$\mu(s)=t+\mu(\varphi_t(s))$$
and using Lemma \ref{lemma2} and the continuity of $\mu$, we  get that $\mu(s)=\infty+l$, which is false, thus  $\lim\limits_{s\rightarrow-\infty}\mu (s) =-\infty$.
If we put
$$\ell=\lim\limits_{s\rightarrow\infty}\mu (s)\in\mathbb{R}\cup\{+\infty\},$$
then the function $\mu:\mathbb{R}\to (-\infty,\ell)$
is invertible, with continuous inverse, and by \eqref{eq.mu} we get  \eqref{eq5}.

Conversely, one can easily check that the identity \eqref{eq5} defines a non-degenerate semiflow.
\end{proof}

In the following,
$\mathcal{A}$ denotes the set of all continuous, strictly increasing functions $\mu:\mathbb{R}\to (-\infty,\ell)$ with
$$\lim\limits_{s\rightarrow-\infty}\mu (s) =-\infty \text{ and } \lim\limits_{s\rightarrow+\infty}\mu (s) =\ell\in\mathbb{R}\cup\{+\infty\}.$$

Theorem \ref{th4} shows that  $\varphi:\mathbb{R}_+\times \mathbb{R}\to \mathbb{R}$ is a non-degenerate semiflow if and only if there exits $\mu\in\mathcal{A}$ such that
\eqref{eq5} holds.
Furthermore,
the following alternative
holds true:
\emph{either all the maps $s\mapsto\varphi_{t}(s)$, $t>0$,
are bounded, if $\mu$ is bounded by above, i.e.,   $\ell\in\mathbb{R}$, or they are all
unbounded, if $\ell  =+\infty$.}

\begin{example}
Setting $\mu(s)=-e^{-s}$, for  $s\in\mathbb{R}$, we obtain the first situation above, with $\ell=0$. On the other hand, if $\mu(s)=\mathrm{sign}(s)\ln(1+|s|)$ or  $\mu(s)=s^{2n+1}$, for some fixed $n\in\mathbb{N}$, we get $\ell=+\infty$. In particular, for $n=0$, that is  $\mu(s)=s$, we obtain the right translation semiflow.
\end{example}

\section{Evolution semigroups}\label{sec.3}

In the following, $X=(X,\| \cdot \|)$ is a Banach space and $\mathcal{B}(X)$ denotes the Banach algebra of all bounded linear operators on $X$. Furthermore, $C(\mathbb{R},X)$ is the space of all continuous $X$-valued functions $u:\mathbb{R}\to X$
and by $C_0(\mathbb{R},X)$ we mean the Banach space of all functions in $C(\mathbb{R},X)$ vanishing at $\pm\infty$, endowed with the sup-norm
$$\|u\|_\infty=\sup\limits_{s\in\mathbb{R}} \|u(s)\|,$$
that is,
$$C_0(\mathbb{R},X)=\left\{ u\in C(\mathbb{R},X):\,\lim\limits_{|s|\to\infty} u(s)=0\right\}.$$
It is well-known that
$C_c(\mathbb{R},X)=\left\{ u\in C(\mathbb{R},X):\,supp(u) \text{ is compact}\right\}$
is dense in $C_0(\mathbb{R},X)$.

\subsection{Generalized evolution semigroups}\label{sec.3.1}

The main imperfection of the  theory of evolution semigroups  defined by \eqref{eq.ces}  is that it does not address to evolution families which are not exponentially bounded,
e.g.,
\begin{equation*}\label{eq.U3}
U(t,s)(x_1,x_2)=(e^{s^3-t^3}x_1,\,e^{t^3-s^3}x_2), \text{ for } (x_1,x_2)\in\mathbb{R}^2.
\end{equation*}
In fact, in this case we get
$$T_tu(s)=(e^{-3s^2t+3st^2-t^3}u_1(s-t),\, e^{3s^2t-3st^2+t^3}u_2(s-t)),$$
for $t\geq 0,\, u=(u_1,u_2)\in C_0(\mathbb{R},\mathbb{R}^2), \, s\in\mathbb{R}$.
One can easily observe that $\left\{  T_{t} \right\}_{t\geq 0}$ does not satisfy the semigroup inequality
$$\|T_t\|\leq Ke^{\alpha t},\, t\geq 0,$$
for some constants $\alpha>0$ and $K\geq 1$. Therefore, it is not a $C_{0}$-semigroup on $C_0(\mathbb{R},\mathbb{R}^2)$.

The aim of this section is to generalize the concept of the {evolution semigroup} associated to a not necessarily exponentially bounded evolution family, replacing the right translation semiflow in formula \eqref{eq.ces} with a certain real semiflow.

Let $\varphi:\mathbb{R}_+\times \mathbb{R}\to \mathbb{R}$ be a real semiflow and $\mathcal{U}=\left\{U(t,s)\right\}_{t\geq s}$ be an \emph{evolution family} on $X$.
By a (strongly continuous) \emph{evolution family} on $X$ we mean a collection of bounded linear operators $U(t,s)$, for $t\geq s$,  acting on $X$, such that
\begin{itemize}
\item $U(t,t)=\mathrm{Id}$, $t\in\mathbb{R}$;
\item $U(t,\tau)U(\tau,t_0)=U(t,t_0),$ $t\geq \tau\geq t_0$ in $\mathbb{R}$;
\item for each $x\in X$, the mapping $(t,s)\mapsto U(t,s)x$ is continuous on $$\Delta=\left\{(t,s)\in\mathbb{R}^2:\,t\geq s\right\}.$$
\end{itemize}

For any $t\geq 0$ and $u\in C(\mathbb{R},X)$, we define
\begin{equation}
T_{t}u(s):=U(s,\varphi_{t}(s))u(\varphi_{t}(s)), \;s\in\mathbb{R}.  \label{eq1}%
\end{equation}

One can easily observe that $T_tu\in C(\mathbb{R},X)$, which yields that the mapping $T_t:C(\mathbb{R},X)\to C(\mathbb{R},X)$ is well-defined for each $t\geq 0$.  Furthermore, we have
$$T_0=\mathrm{Id} \text{ and } T_t T_\tau=T_{t+\tau}, \text{ for }t,\tau\geq 0.$$

\begin{remark}\label{rem.degenerate}
If $\varphi$ is a degenerate semiflow, then evidently
there exists $s_{0}\in\mathbb{R}$ such that%
\begin{equation*}\label{eq.ndg}
T_{t}u\left(  s_{0}\right)  =u\left(  s_{0}\right), \text{ for all $u\in C(\mathbb{R},X)$ and $t\geq0$.}
\end{equation*}
Conversely, if the above relation holds for some $s_{0}\in\mathbb{R}$, then $\varphi$ is degenerate.
\end{remark}
Indeed, if we assume that the orbit $o(s_{0})$
is non-trivial, then there exists $t_{0}>0$ with
$$s_{1}=\varphi_{t_{0}}\left(s_{0}\right)<s_{0}.$$
Thus,
$$U\left( s_{0},s_{1}\right)  u(s_{1})=T_{t_0}u(s_0)=u(s_{0}),$$ for all mappings $u\in C(\mathbb{R},X)$.
For arbitrary $x, x_{1}, x_{2}\in X$ with $x_{1}\neq x_{2}$,  choose $u_{1}, u_{2}\in C(\mathbb{R},X)$
such that
$$u_{1}(s_{0})=x_{1},\,u_{2}(s_{0})=x_{2}, \text{  and } u_{1}(s_{1})=u_{2}(s_{1})=x.$$ This yields
$$U\left(  s_{0},s_{1} \right)x=x_{1} \text{  and }U\left(  s_{0},s_{1} \right)x=x_{2},$$
which contradicts $x_1\neq x_2$. Therefore, $o(s_0)$ is trivial and, consequently, $\varphi$ is degenerate.

\begin{remark}
If $\varphi:\mathbb{R}_+\times \mathbb{R}\to \mathbb{R}$ is a degenerate semiflow satisfying any of the following  three equivalent conditions:
\begin{enumerate}
\item[(i)] $\omega$ is bounded by above,
\item[(ii)] $\omega$ is constant in a neighborhood of $+\infty$,
\item[(iii)] there exist sequences $t_n>0$ and $s_n\to \infty$ for which $(\varphi_{t_n} (s_n) )_{n\in\mathbb{N}}$ is bounded by above,
\end{enumerate}
then
$\left\{T_{t}\right\}_{t\geq0}$, given by \eqref{eq1}, might not be a $C_{0}$-semigroup on $C_0(\mathbb{R},X)$.
\end{remark}

Let us first prove that the above conditions are equivalent if $\varphi$ is a degenerate semiflow, which means that there exists $s_0\in\mathbb{R}$ with $\omega(s_0)\in\mathbb{R}$.

(i) $\Rightarrow$ (ii) Suppose that there exists $\alpha\in\mathbb{R}$ such that $\omega(s)\leq \alpha$, for every $s\geq s_0$. Then, for $s\geq s_0$ we get that $\omega(s)\in\mathbb{R}$ and
$\omega(\omega(s))\leq \omega(\alpha)$. By \eqref{eq.prop.c}, this
yields that  $\omega(s)\leq \omega(\alpha)$, for every $s\geq s_0$. On the other hand, for  $s\geq \alpha$ we have $\omega(s)\geq \omega(\alpha)$. Hence, $\omega(s)=\omega(\alpha), \text{ for every } s\geq \max\{s_0,\alpha\},$
which proves (ii).

(ii) $\Rightarrow$ (iii) Assume that there exist $\alpha\in\mathbb{R}$ and $\tau\geq s_0$ with $\omega(s)=\alpha$ for every $s\geq \tau$. Thus, there exists $\delta>0$ such that
$$\varphi_t(s)-\alpha<1, \text{ for all }  t>\delta \text{ and } s\geq \tau.$$
For each $n\in\mathbb{N}$ choose $t_n>\delta$ and $s_n\geq \tau$ with $s_n\to\infty$. Then, $\varphi_{t_n}(s_n)<\alpha+1$ for every $n\in\mathbb{N}$, that is $(\varphi_{t_n} (s_n) )_{n\in\mathbb{N}}$ is bounded by above.

(iii) $\Rightarrow$ (i) Let $t_n> 0$ and $s_n\in\mathbb{R}$ with $s_n\to\infty$ such that
$\varphi_{t_n} (s_n) \leq \alpha$ for every $n\in\mathbb{N}$ and for some $\alpha\in\mathbb{R}$.
Fix $s\geq s_0$. Since $s_n\to\infty$, there exists $n_0\in\mathbb{N}$ such that $s_n>s$ for every $n\geq n_0.$ Then, $\omega(s_n)\geq \omega(s)$ for $n\geq n_0.$  On the other hand, from  \eqref{eq.w} we get
$\omega(s_n)\leq \alpha$ and hence $\omega(s)\leq \alpha$ for every $s\geq s_0$.

\medskip

Now, set $U(t,s)=\mathrm{Id}$, for $t\geq s$, and let $t_n\searrow 0$ and $s_n\to +\infty$ such that
$$\alpha-1<\varphi_{t_n}(s_n)<\alpha, \text{ for all } n\in\mathbb{N} \text{ and for some } \alpha\in\mathbb{R}.$$
Pick $u\in C_{c}(\mathbb{R},X)$ such that $u(t)=\xi\neq 0$ for $t\in (\alpha-1,\alpha).$ Since $s_n\to+\infty$ and $u$ has compact support, we get that $u(s_n)=0$ for large $n\in\mathbb{N}$. Then,
\begin{align*}
T_{t_n}u(s_n)-u(s_n)&=U(s_n,\varphi_{t_n}(s_n))u(\varphi_{t_n}(s_n))-u(s_n)=u(\varphi_{t_n}(s_n))=\xi\nrightarrow 0,
\end{align*}
which proves that $\left\{T_{t}\right\}_{t\geq0}$ is not a $C_{0}$-semigroup on $C_0(\mathbb{R},X)$.

\medskip

In the following result, we give conditions such that the family $\left\{T_{t}\right\}_{t\geq0}$ considered in \eqref{eq1} is a $C_{0}$-semigroup on $C_0(\mathbb{R},X)$.

\begin{theorem}\label{th1}
Assume that  $\varphi:\mathbb{R}_+\times \mathbb{R}\to \mathbb{R}$ is a non-degenerate  semiflow with
\begin{equation}\label{eq.sem}
\lim\limits_{s \to \infty} \varphi_t(s)=+\infty, \text{ for } t\geq 0,
\end{equation}
or a (degenerate) real semiflow with $\lim\limits_{s\to \infty} \omega(s)=+\infty$. If the evolution family $\mathcal{U}=\left\{U(t,s)\right\}_{t\geq s}$ satisfies inequality
\begin{equation}
\| U(s,\varphi_{t}(s))\| \leq Ke^{\alpha t},\; t\geq 0, \, s\in\mathbb{R},\label{eq7}%
\end{equation}
for some reals $\alpha>0$ and $K\geq 1$, then
$\left\{T_{t}\right\}_{t\geq0}$ defined in \eqref{eq1} is a $C_{0}$-semigroup on $C_0(\mathbb{R},X)$.
\end{theorem}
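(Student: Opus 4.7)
The plan is to verify, in order: (a) each $T_t$ maps $C_0(\mathbb{R},X)$ into itself and is a bounded operator, (b) the algebraic semigroup identities hold, and (c) $t\mapsto T_t$ is strongly continuous at $0$.

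For (a), continuity of $T_tu$ on $\mathbb{R}$ follows by composing the continuity of $\varphi$ and $u$ with the strong continuity of $\mathcal{U}$ on $\Delta$. For decay at $\pm\infty$, the bound \eqref{eq7} gives $\|T_tu(s)\|\leq Ke^{\alpha t}\|u(\varphi_t(s))\|$, so it suffices to show $u(\varphi_t(s))\to 0$ as $s\to\pm\infty$. The case $s\to-\infty$ follows from \eqref{eq.-oo}. For $s\to+\infty$, in the non-degenerate case this is the standing hypothesis \eqref{eq.sem}, whereas in the degenerate case \eqref{eq.w} yields $\varphi_t(s)\geq\omega(s)\to+\infty$. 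Thus $T_t\in\mathcal{B}(C_0(\mathbb{R},X))$ with $\|T_t\|\leq Ke^{\alpha t}$. The identities $T_0=\mathrm{Id}$ and $T_{t+\tau}=T_tT_\tau$ required in (b) were already observed in the paragraph preceding \eqref{eq7}.

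The heart of the argument is (c). Since $\|T_t\|\leq Ke^\alpha$ on $[0,1]$ and $C_c(\mathbb{R},X)$ is dense in $C_0(\mathbb{R},X)$, a standard three-epsilon estimate reduces the problem to showing $\|T_tu-u\|_\infty\to 0$ for $u\in C_c(\mathbb{R},X)$. Fix such a $u$ with $\mathrm{supp}(u)\subset[-M,M]$ and some $\delta\in(0,1]$. I would partition $\mathbb{R}$ into three regions. If $s<-M$, then $\varphi_t(s)\leq s<-M$ kills both $u(s)$ and $u(\varphi_t(s))$. For the region at $+\infty$, using \eqref{eq.sem} in the non-degenerate case, or $\omega(s)\leq\varphi_\delta(s)$ with $\omega\to+\infty$ in the degenerate case, one picks $S_0>M$ such that $\varphi_\delta(s)>M$ for $s\geq S_0$; then monotonicity in $t$ (Remark \ref{remark1}) gives $\varphi_t(s)\geq\varphi_\delta(s)>M$ for every $t\in[0,\delta]$, so both $u(s)$ and $u(\varphi_t(s))$ again vanish. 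In both regions $T_tu(s)-u(s)=0$.

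It remains to obtain uniform convergence on the compact set $K=[-M,S_0]$, where I decompose
$$T_tu(s)-u(s)=U(s,\varphi_t(s))\bigl[u(\varphi_t(s))-u(s)\bigr]+\bigl[U(s,\varphi_t(s))u(s)-u(s)\bigr].$$
Uniform continuity of $\varphi$ on the compact set $[0,\delta]\times K$ together with $\varphi_0(s)=s$ gives $\sup_{s\in K}|\varphi_t(s)-s|\to 0$; combined with the uniform continuity of $u$ and with \eqref{eq7}, this controls the first bracket uniformly. For the second bracket, I would prove that $(t,s)\mapsto U(s,\varphi_t(s))u(s)$ is continuous on $[0,\delta]\times K$: along $(t_n,s_n)\to(t_0,s_0)$, I add and subtract $U(s_n,\varphi_{t_n}(s_n))u(s_0)$, using \eqref{eq7} to absorb $\|u(s_n)-u(s_0)\|$, and the strong continuity of $\mathcal{U}$ applied at the fixed vector $u(s_0)$ to handle the remaining piece. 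Compactness of $[0,\delta]\times K$ then upgrades this joint continuity to uniformity, whence $U(s,\varphi_t(s))u(s)\to u(s)$ uniformly in $s\in K$ as $t\to 0^+$. The main delicacy I expect lies in this final step, namely passing from the merely pointwise strong continuity of $\mathcal{U}$ to a uniform statement in $s$; the combination of \eqref{eq7} with the compactness produced by truncating to the support of $u$ is precisely what makes the argument work.
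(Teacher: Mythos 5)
Your proof is correct, and its skeleton coincides with the paper's: first check that $T_t$ maps $C_0(\mathbb{R},X)$ into itself with $\|T_t\|\leq Ke^{\alpha t}$ using \eqref{eq7} together with \eqref{eq.-oo} and \eqref{eq.sem} (or $\omega(s)\to+\infty$ via \eqref{eq.w}), then reduce strong continuity to $u\in C_c(\mathbb{R},X)$ by density, dispose of the two tails of $\mathbb{R}$ by compact support and monotonicity of $\varphi$, and finally use compactness on the remaining bounded region to convert the pointwise strong continuity of $\mathcal{U}$ into a uniform statement. Where you diverge is in the mechanics of that last step: the paper works with sequences $t_n\to 0^+$, $s_n$ bounded, chooses a finite $\varepsilon$-net $\{\eta_1,\dots,\eta_k\}$ of the (totally bounded) range of $u$, applies the strong continuity of $U(\cdot,\cdot)$ separately at each $\eta_i$ as in \eqref{eq10}, and runs a four-term telescoping estimate; you instead prove joint continuity of $(t,s)\mapsto U(s,\varphi_t(s))u(s)$ on the compact rectangle $[0,\delta]\times[-M,S_0]$ and invoke Heine--Cantor, with only a two-term decomposition. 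Both routes use compactness in an essentially interchangeable way, but yours is slightly more streamlined and has the incidental merit of making rigorous a point the paper glosses over: inequality \eqref{eq10} as literally stated (for \emph{all} $s,\tau$ with $|s-\tau|<\delta(\varepsilon,\eta_i)$) does not follow from continuity of $(s,\tau)\mapsto U(s,\tau)\eta_i$ alone and needs the restriction to a bounded set of $s$, which your compact-rectangle formulation builds in from the start.
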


\begin{proof}
For $u\in C_0(\mathbb{R},X)$, by \eqref{eq7}, we have
$$\| T_t u(s)\|=\| U(s,\varphi_{t}(s))u(\varphi_{t}(s))\|\leq Ke^{\alpha t} \| u(\varphi_{t}(s))\|.$$

Let us first remark that if $\lim\limits_{s\to \infty} \omega(s)=+\infty$, then, by \eqref{eq.w}, we deduce that \eqref{eq.sem} holds.
Further,  \eqref{eq.-oo} and \eqref{eq.sem} imply $u(\varphi_{t}(s))\to 0$ as $s\to \pm \infty$, thus $T_t u\in C_0(\mathbb{R},X)$ and, consequently, $T_t:C_0(\mathbb{R},X)\to C_0(\mathbb{R},X)$ is well-defined for each $t\geq 0$.

Since  $C_{c}(\mathbb{R},X)$ is dense in $C_{0}(\mathbb{R},X)$, it suffices to prove that
$$\lim_{n\rightarrow\infty}\left(  T_{t_{n}}u(s_{n})-u(s_{n})  \right)=0,$$
for any $t_{n}\to 0^+$, $s_{n}\in\mathbb{R}$ and fixed $u\in C_{c}(\mathbb{R},X)$.

If $s_{n}\rightarrow-\infty$, then $\varphi_{t_{n}}(s_{n})\rightarrow-\infty$, and if $s_{n}\rightarrow +\infty$, then
$\varphi_{t_{n}}(s_{n})\rightarrow+\infty$. Since $u$ has compact support, in both situations we get
$ T_{t_{n}}u(s_{n})-u(s_{n})=0$
for sufficiently large $n\in\mathbb{N}$.

It remains to assume that the sequence $(s_{n})$ is
bounded and pick $\varepsilon>0$. Then there exists a finite set
$A_{\varepsilon}=\left\{  \eta_{1},\eta_{2}, \ldots, \eta_{k}\right\}\subset \mathbb{R}$ such that for
any $n\in\mathbb{N}$ there exists $\xi_{n}\in A_{\varepsilon}$ with
\begin{equation}
\| u( s_{n})-\xi_{n}\| <\eta, \label{eq8}%
\end{equation}
where $\eta=\frac{\varepsilon}{2(Ke^\alpha +1)}.$

Having compact support, $u$ is uniformly continuous, thus there exists $\delta_1(\varepsilon)>0$ such that
\begin{equation}\label{eq.inter1}
\| u(s^\prime)-u(s^{\prime\prime}) \|<\eta, \text{ if $|s^\prime-s^{\prime\prime}| <\delta_1(\varepsilon)$. }
\end{equation}

On the other hand, as $\Delta\ni(s,\tau)\mapsto U(s,\tau)x\in X$ is
continuous for each $x\in X$, then, setting $x=\eta_i\in A_\varepsilon$, there exists
$\delta(\varepsilon,\eta_i)>0$ such that
$|s-\tau|<\delta(\varepsilon,\eta_i)$ implies%
\begin{equation}
\| U(s,\tau)\eta_{i}-\eta_{i}\| <\eta,\,i=\overline{1,k}. \label{eq10}%
\end{equation}

Put
$$\delta(\varepsilon)=\min\left\{\delta_1(\varepsilon),\delta(\varepsilon,\eta_{1}), \ldots,\delta(\varepsilon,\eta_{k})\right\}.$$

Since $(s_{n})$ is bounded, then
$\varphi_{t_{n}}(s_{n})-s_{n}\rightarrow 0$,
thus there exists $N(\varepsilon)\in \mathbb{N}$ such that
\begin{equation}\label{eq.inter2}
|\varphi_{t_{n}}( s_{n}) -s_{n}|<\delta(\varepsilon), \text{ for } n\geq N(\varepsilon),
\end{equation}
and, by \eqref{eq.inter1}, we get
\begin{equation}
\| u(\varphi_{t_{n}}(s_{n}))-u(s_{n})\| <\eta, \text{ for } n\geq N(\varepsilon). \label{eq9}%
\end{equation}
On the other hand, \eqref{eq10} and \eqref{eq.inter2}  imply
\begin{equation}\label{eq.inter3}
\| U(s_n,\varphi_{t_n}(s_n))\xi_n-\xi_n\|<\eta, \text{ for } n\geq N(\varepsilon).
\end{equation}

Finally, using
inequalities \eqref{eq7}, \eqref{eq8}, \eqref{eq9} and \eqref{eq.inter3}, for  $n\geq N(\varepsilon)$ we have
\begin{align*}
\| T_{t_{n}}u(s_{n})-u(s_{n})\|
&\leq \| U(s_{n},\varphi_{t_{n}}(s_{n}))(u(\varphi_{t_{n}}(s_{n}))-u(s_{n})) \| \\
&\quad +\| U(s_{n},\varphi_{t_{n}}(s_{n}))(u(s_{n})-\xi_{n})\| \\
&\quad +\| U(s_{n},\varphi_{t_{n}}(s_{n}))\xi_{n}-\xi_{n}\| \\
&\quad +\| \xi_{n}-u(s_{n})\|\\
&\leq 2Ke^{\alpha t_{n}}\eta+2\eta.
\end{align*}
Therefore,
$\| T_{t_{n}}u(s_{n})-u(s_{n})\|<\varepsilon$
for sufficiently large $n$, which completes the proof.
\end{proof}

The $C_{0}$-semigroup $\left\{T_{t}\right\}_{t\geq0}$ given by Theorem \ref{th1} will be called \emph{the generalized evolution semigroup} on $C_0(\mathbb{R},X)$ associated to the real semiflow $\varphi$ and the evolution family $\mathcal{U}$.

Notice that if we put $\varphi_{t}(s)=s-t$  (in fact, the right translation semiflow),
we step over the well-known
concept of the evolution semigroup on $C_0(\mathbb{R},X)$ associated to an exponentially bounded evolution family (see \eqref{eq.ueg} and \eqref{eq.ces}).

\begin{example}
Setting
\[
\varphi_{t}\left(  s\right)  =
\begin{cases}
se^{t},& \text{ if }s<0,\\
s,& \text{ if }s\geq0,
\end{cases}
\]
one can easily check that $\varphi$ is a real semiflow with
trivial orbits (in fact, $o(s)=\left\{ s\right\}$ for all $s\geq0$) and
\[
\omega(s)=
\begin{cases}
-\infty,& \text{ if }s<0,\\
s,& \text{ if }s\geq0.
\end{cases}
\]
Furthermore,
$\lim\limits_{s\to \infty} \omega(s)=+\infty$. Then, according to Theorem \ref{th1}, the corresponding family $\left\{T_{t}\right\}_{t\geq0}$ defined by \eqref{eq1} is a $C_{0}$-semigroup on $C_0(\mathbb{R},X)$ provided that the evolution family $\mathcal{U}$ satisfies condition \eqref{eq7}. This reduces in fact to
$$\| U(s,\tau)\| \leq K e^{\alpha(\ln(-\tau)-\ln(-s))} \; \Leftrightarrow \; \| U(s,\tau)\| \leq K ({\tau}/{s})^\alpha,$$
for some constants $\alpha>0$ and $K\geq 1$, and all negative $s>\tau$. For instance, the evolution family
$$U(s,\tau)=\frac{1+|\tau|}{1+|s|}\,\mathrm{Id}, \,s\geq \tau,$$
satisfies the above inequality for $\alpha=K=1$.
\end{example}

\begin{remark}
Let $\varphi:\mathbb{R}_+\times \mathbb{R}\to \mathbb{R}$ be a continuous function such that
\begin{equation}\label{eq.phi2}
\varphi(t,s)\leq s, \text{ for all } t\geq 0 \text{ and } s\in\mathbb{R},
\end{equation}
and set
$$\varphi_t(s)=\varphi(t,s),\; s\in\mathbb{R},$$
the transition map defined by $t\geq 0$. If the family $\left\{T_{t}\right\}_{t\geq0}$ given by formula \eqref{eq1} is a
$C_{0}$-semigroup on $C_0(\mathbb{R},X)$, then $\varphi$ is a real semiflow provided that $U(t,s)\neq 0$ for all $t>s$.
\end{remark}
\begin{proof}
Indeed, it suffices to prove the relations (i) and (ii) in Definition \ref{def4}. By \eqref{eq.phi2} we have that $\varphi_0(s)\leq s$ for every $s\in\mathbb{R}$. Assume that there exists $s_0\in\mathbb{R}$ such that $\varphi_0(s_0)< s_0$. For arbitrary $x\in X$, $x\neq 0$, we choose $u\in C_c(\mathbb{R},X)$ with
$$u(\varphi_0(s_0))=x \text{ and } u(s_0)=0.$$
Since $T_0u(s_0)=u(s_0)=0$, we get
$$0=U(s_0,\varphi_0(s_0))u(\varphi_0(s_0))=U(s_0,\varphi_0(s_0))x,$$
and thus $U(s_0,\varphi_0(s_0))=0$, which is false. Then, $\varphi_0(s)=s$ for every $s\in\mathbb{R}$. To prove the second relation in Definition \ref{def4}, one can easily check that
$$T_t T_\tau=T_{t+\tau}, \text{ for }t,\tau\geq 0,$$
implies
$$U(s,\varphi_\tau(\varphi_t(s)))u(\varphi_\tau(\varphi_t(s)))=U(s,\varphi_{t+\tau}(s))u(\varphi_{t+\tau}(s)),$$
for all $t,\tau\geq 0,\,s\in\mathbb{R}, \,u\in C_0(\mathbb{R},X),$
and, proceeding as above, one may prove that $\varphi_{t}\circ\varphi_{\tau}=\varphi_{t+\tau}$, for all $t,\tau\geq 0$.
\end{proof}

Theorem \ref{th4} shows that
if $\varphi$
is a non-degenerate semiflow, then there exists $\mu\in\mathcal{A}$ such that
\begin{equation}
T_{t}u(s)=U(s,\mu^{-1}(\mu(s)-t))  u(\mu^{-1}(\mu(s)-t)),  \label{eq22}%
\end{equation}
for all $t\geq 0$, $u\in C(\mathbb{R},X)$, and $s\in\mathbb{R}$. On the other hand,  \eqref{eq.sem} is equivalent to
\begin{equation}\label{eq.loo}
\ell=\lim\limits_{s\rightarrow+\infty}\mu (s)=+\infty,
\end{equation}
and we will denote $\mathcal{A}_{\,\infty}$ the set of all $\mu\in\mathcal{A}$ satisfying \eqref{eq.loo}. Furthermore,
the inequality \eqref{eq7} becomes
\begin{equation}
\| U(s,\tau)\| \leq K e^{\alpha (\mu(s) -\mu(\tau))}, \text{ for } s\geq \tau. \label{eq18}%
\end{equation}

Therefore, we deduce a particular class of generalized evolution semigroups which is emphasized in the next result.

\begin{corollary}\label{th.ges-nd}
Assume that
$\mu\in\mathcal{A}_{\, \infty}$ and the evolution family $\mathcal{U}$ satisfies \eqref{eq18} for some constants
$\alpha>0$ and $K\geq 1$. Then
the family $\left\{T_{t}\right\}_{t\geq0}$ given by \eqref{eq22} is a $C_{0}$-semigroup on $C_0(\mathbb{R},X)$.
\end{corollary}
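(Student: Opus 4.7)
The plan is to deduce this corollary as an immediate specialization of Theorem \ref{th1}. The first step is to invoke Theorem \ref{th4}: for every $\mu\in\mathcal{A}$, the formula $\varphi_t(s)=\mu^{-1}(\mu(s)-t)$ defines a non-degenerate real semiflow on $\mathbb{R}$, and the family $\{T_t\}_{t\geq 0}$ given by \eqref{eq22} is precisely the family \eqref{eq1} written for this particular $\varphi$. So it suffices to verify that, under the hypotheses of the corollary, both standing assumptions of Theorem \ref{th1} are met.

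For the semiflow condition \eqref{eq.sem}, I would use the defining property $\ell=+\infty$ of $\mathcal{A}_\infty$: as $s\to +\infty$ we have $\mu(s)\to +\infty$, hence $\mu(s)-t\to +\infty$ for every fixed $t\geq 0$, and since $\mu^{-1}$ is continuous and strictly increasing on its domain by Theorem \ref{th4}, it follows that $\varphi_t(s)=\mu^{-1}(\mu(s)-t)\to +\infty$. For the growth bound \eqref{eq7}, I would simply substitute $\tau=\varphi_t(s)$ into \eqref{eq18}. Applying $\mu$ to both sides of the defining identity for $\varphi_t$ yields $\mu(\tau)=\mu(s)-t$, so $\mu(s)-\mu(\tau)=t$; thus \eqref{eq18} collapses to
\[
\|U(s,\varphi_t(s))\|\leq K e^{\alpha t}, \quad t\geq 0,\; s\in\mathbb{R},
\]
which is exactly \eqref{eq7}. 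With both hypotheses of Theorem \ref{th1} in force, its conclusion delivers the claim.

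The corollary is essentially Theorem \ref{th1} read through the parameterization of non-degenerate semiflows supplied by Theorem \ref{th4}, so I do not anticipate any substantive obstacle. The only points that require a moment's care are checking that the condition $\ell=+\infty$ on $\mu$ is exactly what is needed to guarantee \eqref{eq.sem}, and that the exponent $\mu(s)-\mu(\tau)$ in \eqref{eq18} telescopes to $t$ along the orbit $\tau=\varphi_t(s)$; both are immediate from the identity $\mu(\varphi_t(s))=\mu(s)-t$ built into the representation \eqref{eq5}.
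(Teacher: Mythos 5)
Your proposal is correct and follows exactly the route the paper intends: the remarks preceding the corollary already record that Theorem \ref{th4} parameterizes non-degenerate semiflows by $\mu\in\mathcal{A}$, that \eqref{eq.sem} translates to $\ell=+\infty$, and that \eqref{eq7} becomes \eqref{eq18} via $\mu(\varphi_t(s))=\mu(s)-t$, after which Theorem \ref{th1} applies. Your verification of these two translations is precisely the (implicit) proof in the paper.
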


If $\rho:\mathbb{R}\to [0,\infty)$ is a continuous function such that the set $\{t\in\mathbb{R}: \rho(t)>0\}$ is dense in $\mathbb{R}$,
\begin{equation}\label{cond.ro}
\lim\limits_{t\to \infty} \int_0^t \rho(\xi) \, d\xi =+\infty \text{ and } \lim\limits_{t\to -\infty} \int_t^0 \rho(\xi) \, d\xi =+\infty,
\end{equation}
(see \cite[Definition 4 (iii)]{Martin} or  \cite[Definition 2.3]{Jiang}) and there exist constants $\alpha>0$ and $K\geq 1$ with
\begin{equation}\label{eq.gen}
\| U(s,\tau)\| \leq K e^{\alpha\int_\tau^s \rho(\xi)\,d\xi}, \; s\geq \tau,
\end{equation}
then the function defined by $$\mu(t)=\int_0^t \rho(\xi) \, d\xi, \, t\in\mathbb{R},$$ belongs to $\mathcal{A}_{\, \infty}$, it is continuously differentiable  and \eqref{eq18} holds.
Inequality  \eqref{eq.gen} is in fact the hypothesis of $\rho$-bounded growth assumed in \cite{Mul}.
Notice that any evolution family generated by a differential equation $\dot{x%
}=A(t)x$, where $\mathbb{R}\ni t\mapsto A(t)\in \mathcal{B}(X)$ is
continuous in uniform operator topology, satisfies \eqref{eq.gen}
for $\rho(t)=\| A(t) \|$.

\subsection{Similar semigroups}\label{p.similar}

In the following, let the assumptions of Corollary \ref{th.ges-nd} hold, that is
$\mu:\mathbb{R}\to \mathbb{R}$ is a continuous, strictly increasing function such that
$$\lim\limits_{s\rightarrow-\infty}\mu (s) =-\infty \text{ and } \lim\limits_{s\rightarrow+\infty}\mu (s) =+\infty,$$
and the evolution family $\mathcal{U}$ satisfies \eqref{eq18}.

Set
$$V(t,s)=U(\mu^{-1}(t),\mu^{-1}(s)), \text{ for } t\geq s.$$
One can easily check that $\mathcal{V}=\left\{ V(t,s)\right\}_{t\geq s}$ is an evolution family on $X$ satisfying
\begin{equation*}
\| V(t,s) \| \leq K e^{\alpha (t-s)}, \text{ for } t\geq s,
\end{equation*}
and thus $\mathcal{V}$ is exponentially bounded.

From the classical theory of evolution semigroups, we are able now to define
the evolution semigroup on $C_0(\mathbb{R},X)$ associated to the evolution family $\mathcal{V}$, that is
a $C_{0}$-semigroup on $C_0(\mathbb{R},X)$, given by
\begin{equation*}
S_tv(s)=V(s,s-t)v(s-t),\, t\geq 0, v\in C_0(\mathbb{R},X), s\in\mathbb{R},
\end{equation*}
and denote $(A,D(A))$ its generator.

We have
$$S_tv(s)=U(\mu^{-1}(s),\mu^{-1}(s-t))v(s-t)$$
and so
$$S_tv(\mu(s))=U(s,\mu^{-1}(\mu(s)-t))v(\mu(s)-t).$$
Letting $v=u\circ \mu^{-1}\in C_0(\mathbb{R},X)$ for $u\in C_0(\mathbb{R},X)$, we get
$$(S_t(u\circ \mu^{-1}))(\mu(s))=U(s,\mu^{-1}(\mu(s)-t))u(\mu^{-1}(\mu(s)-t))=T_tu(s).$$
Hence,
\begin{equation}\label{eq.sim1}
(S_t(u\circ \mu^{-1})) \circ \mu=T_t u, \text{ for every } u\in C_0(\mathbb{R},X).
\end{equation}
Define now the operator
\begin{equation*}\label{eq.F}
\mathcal{F}: C_0(\mathbb{R},X) \to C_0(\mathbb{R},X), \; \mathcal{F}u=u\circ \mu^{-1}.
\end{equation*}
Obviously, $\mathcal{F}$ is an invertible, bounded linear operator on $C_0(\mathbb{R},X)$, with its inverse
\begin{equation*}\label{eq.F-1}
\mathcal{F}^{-1}v=v\circ \mu.
\end{equation*}
From \eqref{eq.sim1} we have
$$T_t u= (S_t(\mathcal{F}u)) \circ \mu=\mathcal{F}^{-1}(S_t(\mathcal{F} u)), \text{ for every } u\in C_0(\mathbb{R},X),$$
and thus
\begin{equation*}\label{eq.similar}
T_t=\mathcal{F}^{-1} S_t\, \mathcal{F}.
\end{equation*}
The above relation shows that \emph{$\left\{S_{t}\right\}_{t\geq0}$ and $\left\{T_{t}\right\}_{t\geq0}$ are similar semigroups on $C_0(\mathbb{R},X)$} (see   \cite[p. 59]{Eng}).
Hence, the generator $G$ of the generalized evolution semigroup $\left\{T_{t}\right\}_{t\geq0}$ is given by
\begin{equation}\label{DAG}
Gu=(\mathcal{F}^{-1} A\, \mathcal{F})u=A\,(u\circ \mu^{-1})\circ \mu,
\end{equation}
with domain
\begin{equation}\label{DAG-1}
D(G)=\left\{ u\in C_0(\mathbb{R},X):\, u\circ \mu^{-1}\in D(A) \right\}.
\end{equation}
Furthermore, $\sigma(G)=\sigma(A)$.

This result is of a significant importance because one can now simply deduce the spectral mapping theorem for the generalized evolution semigroup, given by Corollary \ref{th.ges-nd}, from the classical theory of evolution semigroups.

\begin{corollary}\label{th2}
Under the assumptions of Corollary \ref{th.ges-nd}, let $G$ be the generator of the generalized evolution semigroup $\left\{T_{t}\right\}_{t\geq0}$.
The spectrum $\sigma(T_t)$ is rotationally invariant for  $t>0$ and the spectrum $\sigma(G)$ is invariant under translations along the imaginary axis. Furthermore,
the generalized evolution semigroup satisfies the spectral mapping theorem
\begin{equation*}\label{smt}
\sigma\left(  T_{t} \right)\setminus\{ 0 \}  =e^{t\sigma(G)}, \text{ for } t\geq 0,
\end{equation*}
and, consequently, the following statements are equivalent:
\begin{enumerate}
\item $\sigma(T_t)\cap \mathbb{T}=\emptyset$, for some $t>0$,  and thus for all $t>0$;
\item $0\in \varrho(G)$, that is $G:D(G)\subseteq C_0(\mathbb{R},X) \to C_0(\mathbb{R},X)$ is invertible, where $\varrho(G)$ is the resolvent of $G$.
\end{enumerate}
\end{corollary}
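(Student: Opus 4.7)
The plan is to transport all assertions from the classical evolution semigroup on $C_0(\mathbb{R},X)$ via the similarity relation $T_t = \mathcal{F}^{-1} S_t\, \mathcal{F}$ constructed in subsection \ref{p.similar}. The key observations are that similarity preserves spectra at both levels: $\sigma(T_t) = \sigma(S_t)$ for each $t \geq 0$, and, from \eqref{DAG}--\eqref{DAG-1}, $G = \mathcal{F}^{-1} A\, \mathcal{F}$ so $\sigma(G) = \sigma(A)$. Since $\mathcal{V}$ is exponentially bounded, $\{S_t\}_{t \geq 0}$ is the classical evolution semigroup associated to $\mathcal{V}$, so the theory recalled in the introduction applies verbatim: $\sigma(S_t)$ is rotationally invariant for every $t > 0$, $\sigma(A) = \sigma(A) + i\mathbb{R}$, and the spectral mapping theorem $\sigma(S_t) \setminus \{0\} = e^{t\sigma(A)}$ holds.

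The two spectral symmetries asserted for $\{T_t\}$ and $G$ are then immediate, as rotational invariance of $\sigma(S_t)$ transfers to $\sigma(T_t)$ and imaginary-axis translation invariance of $\sigma(A)$ transfers to $\sigma(G)$. Likewise, the spectral mapping identity reads
\begin{equation*}
\sigma(T_t) \setminus \{0\} = \sigma(S_t) \setminus \{0\} = e^{t\sigma(A)} = e^{t\sigma(G)}, \quad t \geq 0.
\end{equation*}

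For the equivalence (1)\,$\Leftrightarrow$\,(2), I would first invoke the spectral mapping theorem just obtained to reduce (1) to the condition $\sigma(G) \cap i\mathbb{R} = \emptyset$: indeed, $e^{t\lambda} \in \mathbb{T}$ is equivalent to $\operatorname{Re} \lambda = 0$, and this reduction is independent of $t > 0$, which also explains the ``for some (and hence for all) $t>0$'' clause. Then, using the translation invariance $\sigma(G) = \sigma(G) + i\mathbb{R}$, the set $\sigma(G) \cap i\mathbb{R}$ is either empty or the entire imaginary axis, so $\sigma(G) \cap i\mathbb{R} = \emptyset$ is equivalent to $0 \notin \sigma(G)$, i.e.\ $0 \in \varrho(G)$, which is (2).

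There is essentially no substantive obstacle remaining; the entire technical content was absorbed into the construction of the intertwining operator $\mathcal{F}$ in subsection \ref{p.similar}, and the corollary is a formal transfer. The only care needed is to verify that $\mathcal{F}$ intertwines not merely the semigroups but also their generators with their domains, which is exactly the content of \eqref{DAG}--\eqref{DAG-1} and guarantees $\sigma(G) = \sigma(A)$ as operators on $C_0(\mathbb{R},X)$.
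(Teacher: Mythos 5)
Your proposal is correct and follows essentially the same route as the paper: the paper's own justification of this corollary is precisely the transfer of the classical evolution-semigroup results through the similarity $T_t=\mathcal{F}^{-1}S_t\,\mathcal{F}$ and the resulting identities $\sigma(T_t)=\sigma(S_t)$, $\sigma(G)=\sigma(A)$ established in subsection \ref{p.similar}. Your additional spelling-out of the equivalence (1)\,$\Leftrightarrow$\,(2) via the spectral mapping theorem and the translation invariance of $\sigma(G)$ is exactly the standard argument the paper implicitly relies on.
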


In the following result we connect the generator of the generalized evolution semigroup to the solvability of an integral equation (see \cite[Lemma 1]{Min3} or \cite[Lemma 1.1]{Min1}).

\begin{proposition}\label{lem.int}
Let $u,f\in C_0(\mathbb{R},X)$. Then $u\in D(G)$ and $Gu=-f$ if and only if
\begin{equation*}\label{eq.int1}
u(t)=U(t,s)u(s)+\int_s^t \mu^\prime(\xi) U(t,\xi)f(\xi)\,d\xi, \text{ for } t\geq s.
\end{equation*}
\end{proposition}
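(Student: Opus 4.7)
The proof plan is to reduce everything to the classical evolution semigroup $\{S_t\}_{t\geq 0}$ associated with the exponentially bounded evolution family $\mathcal{V} = \{V(t,s)\}_{t\geq s}$ constructed in subsection~\ref{p.similar}, and then undo the reparametrization by $\mu$. First I set $v = \mathcal{F}u = u\circ\mu^{-1}$ and $g = \mathcal{F}f = f\circ\mu^{-1}$, which both lie in $C_0(\mathbb{R},X)$ because $\mathcal{F}$ is an isometric isomorphism. Combining \eqref{DAG} and \eqref{DAG-1} with the similarity $T_t = \mathcal{F}^{-1}S_t\mathcal{F}$, the statement $u\in D(G)$ with $Gu = -f$ is equivalent to $v\in D(A)$ with $Av = -g$.

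The key input is then the classical characterization of the generator of a (standard) evolution semigroup, namely the lemma of Van Minh cited in the statement (\cite[Lemma 1]{Min3} or \cite[Lemma 1.1]{Min1}): for $v, g \in C_0(\mathbb{R},X)$,
\begin{equation*}
v\in D(A) \text{ and } Av = -g \;\Longleftrightarrow\; v(t) = V(t,s)v(s) + \int_s^t V(t,\xi)g(\xi)\,d\xi \text{ for } t\geq s.
\end{equation*}
Here I may simply quote this result, since the assumptions of Corollary~\ref{th.ges-nd} guarantee that $\mathcal{V}$ is exponentially bounded and $\{S_t\}_{t\geq 0}$ is the genuine evolution semigroup on $C_0(\mathbb{R},X)$.

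Next I translate this integral equation back to the original data. Using $V(t,\xi) = U(\mu^{-1}(t),\mu^{-1}(\xi))$, $v(t) = u(\mu^{-1}(t))$ and $g(\xi) = f(\mu^{-1}(\xi))$, the equivalence becomes
\begin{equation*}
u(\mu^{-1}(t)) = U(\mu^{-1}(t),\mu^{-1}(s))\,u(\mu^{-1}(s)) + \int_s^t U(\mu^{-1}(t),\mu^{-1}(\xi))\,f(\mu^{-1}(\xi))\,d\xi,
\end{equation*}
for all $t\geq s$. I now perform the change of variable $\eta = \mu^{-1}(\xi)$ in the integral; since $\mu$ is continuously differentiable and strictly increasing, $d\xi = \mu'(\eta)\,d\eta$. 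Replacing $t$ by $\mu(t)$ and $s$ by $\mu(s)$ (which preserves the inequality, as $\mu$ is strictly increasing and bijective onto $\mathbb{R}$ by \eqref{eq.loo}), the relation becomes exactly
\begin{equation*}
u(t) = U(t,s)u(s) + \int_s^t \mu'(\eta)\,U(t,\eta)f(\eta)\,d\eta, \quad t\geq s,
\end{equation*}
as required.

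I expect no substantive obstacle beyond correctly invoking Van Minh's lemma; the only mildly delicate point is making sure the change of variables is legitimate, which it is because $\mu$ is a $C^1$-diffeomorphism of $\mathbb{R}$ onto itself under the standing hypotheses (recall the concrete case $\mu(t) = \int_0^t \rho(\xi)\,d\xi$ referenced just before the proposition). Every other manipulation is a direct unwinding of the similarity transformation $\mathcal{F}$ and has already been established in subsection~\ref{p.similar}.
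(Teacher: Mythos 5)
Your proposal is correct and follows essentially the same route as the paper's own proof: transport the problem via the similarity $\mathcal{F}$ to the classical evolution semigroup $\{S_t\}$ for $\mathcal{V}$, invoke Van Minh's lemma there, and undo the reparametrization by the substitution $\xi=\mu(\eta)$. The only cosmetic difference is that you phrase both implications at once as an equivalence, while the paper writes out the forward direction and notes that the converse follows by reversing the steps.
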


\begin{proof}
Let $u\in D(G)$ and $f\in C_{0}(\mathbb{R},X)$
such that $Gu=-f$. By \eqref{DAG}--\eqref{DAG-1} we get
\begin{equation*}
v=u\circ \mu ^{-1}\in D(A)\text{ and }Av=-f\circ \mu ^{-1}.
\end{equation*}
Lemma 1 in \cite{Min3} implies
\begin{equation*}
v(t)=V(t,s)v(s)+\int_{s}^{t}V(t,\tau )f(\mu ^{-1}(\tau ))d\tau ,\;t\geq
s,
\end{equation*}%
which is equivalent to
\begin{equation*}
u(\mu ^{-1}(t))=U(\mu ^{-1}(t),\mu ^{-1}(s))u(\mu
^{-1}(s))+\int_{s}^{t}U(\mu ^{-1}(t),\mu ^{-1}(\tau ))f(\mu
^{-1}(\tau ))d\tau.
\end{equation*}%
Replacing  $t$ and $s$ by $\mu
(t)$ and $\mu (s)$, respectively, we get
\begin{align*}
u(t)& =U(t,s)u(s)+\int_{\mu (s)}^{\mu (t)}U(t,\mu ^{-1}(\tau
))f(\mu ^{-1}(\tau ))d\tau  \\
& =U(t,s)u(s)+\int_{s}^{t}\mu ^{\prime }(\xi )U(t,\xi )f(\xi )d\xi ,
\end{align*}%
which proves the desired formula. The converse can be proved by
reversing all the above arguments.
\end{proof}

\section{Applications of generalized evolution semigroups to general dichotomies}\label{sec.4}

Using the framework constructed in the previous sections, we are able now to completely characterize a wide class of dichotomies of an evolution family  via its corresponding generalized evolution semigroup.

We recall that a $C_0$-semigroup $\left\{T_{t}\right\}_{t\geq0}$ on a Banach space $X$ is called \emph{hyperbolic} if
$$\sigma(T_t)\cap \mathbb{T} =\emptyset \text{ for some/all } t>0,$$
where $\mathbb{T}=\{\lambda\in\mathbb{C}:\,|\lambda|=1\}$. It is well-known that the $C_0$-semigroup $\left\{T_{t}\right\}_{t\geq0}$ is hyperbolic if and only if there exists a projection $\mathcal{P}$ on $X$ satisfying
$$T_t \mathcal{P}=\mathcal{P} T_t,\; t\geq 0,$$
and the following conditions hold:
\begin{itemize}
\item the map $T_t|_{\mathcal{Q}X}: \mathcal{Q}X \to \mathcal{Q}X$ is invertible for each $t\geq 0$, where $\mathcal{Q}=\mathrm{Id}-\mathcal{P}$;
\item there exists $\nu>0$ and $N\geq 1$ such that
\begin{equation*}\label{hyp1}
\| T_t \mathcal{P} x \|\leq N e^{-\nu t} \|x\| \text{ and } \| \left(T_t^{\mathcal{Q}}\right)^{-1} \mathcal{Q} x \|\leq N e^{-\nu t} \|x\|,
\end{equation*}
for $t\geq 0$ and $x\in X$, where $\left(T_t^{\mathcal{Q}}\right)^{-1}$ is the inverse of $T_t|_{\mathcal{Q}X}: \mathcal{Q}X \to \mathcal{Q}X$.
\end{itemize}

In fact, the structural projection $\mathcal{P}$ is the Riesz projection corresponding to the operator $T_{t_0}$ and  the spectral set  $\sigma(T_{t_0})\cap \mathbb{D}$,
$$\mathcal{P}=\frac{1}{2\pi i} \int_{\mathbb{T}}(\lambda - T_{t_0})^{-1} d\lambda,$$
for some fixed $t_0>0$ (without loss of generality, one may consider $t_0=1$).

In the following, assume that $\mu\in\mathcal{A}_{\, \infty}$ and $\mathcal{U}=\left\{U(t,s)\right\}_{t\geq s}$ is an evolution family on $X$ satisfying \eqref{eq18}.
Let
$\left\{  T_{t}\right\}_{t\geq0}$ be the corresponding generalized evolution semigroup given by Corollary \ref{th.ges-nd} and denote $(G,D(G))$ its generator.

\begin{proposition}\label{lemma6}
If the generalized evolution semigroup $\left\{  T_{t}\right\}_{t\geq0}$ is hyperbolic, having structural projection $\mathcal{P}$, then
$$\mathcal{P}(\beta u)=\beta\mathcal{P}u,$$
for all $\beta\in C_{b}(\mathbb{R})$ and  $u\in C_0(\mathbb{R},X)$, where $C_{b}(\mathbb{R})$ denotes the space of all bounded continuous real-valued functions.
\end{proposition}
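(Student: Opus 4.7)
My plan is to establish $\mathcal{P}(\beta u) = \beta \mathcal{P} u$ by showing that both subspaces $E^s := \mathcal{P}\, C_0(\mathbb{R},X)$ and $E^u := (I-\mathcal{P})\, C_0(\mathbb{R},X)$ of the hyperbolic splitting are invariant under the multiplication operator $M_\beta u := \beta u$. Uniqueness of the hyperbolic decomposition then immediately yields the desired commutation, since $\beta\mathcal{P}u \in E^s$ and $\beta(I-\mathcal{P})u \in E^u$ will decompose $\beta u$. The engine of both invariance arguments is a short computation from the definition \eqref{eq1}: for every $t\ge 0$,
\[
(T_t M_\beta u)(s)=U(s,\varphi_t(s))\beta(\varphi_t(s))u(\varphi_t(s))=\beta(\varphi_t(s))(T_t u)(s),
\]
so $T_t M_\beta = M_{\beta\circ\varphi_t}T_t$ and in particular $\|T_t M_\beta u\|_\infty \le \|\beta\|_\infty \|T_t u\|_\infty$.

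For the stable direction, if $u\in E^s$, hyperbolicity gives $\|T_t u\|_\infty \le N e^{-\nu t}\|u\|_\infty$, whence $\|T_t(M_\beta u)\|_\infty \to 0$; since the $E^u$ component of any vector would instead grow like $e^{\nu t}$ under $T_t$, the decay forces $M_\beta u \in E^s$. For the unstable direction I will use the characterization
\[
u\in E^u \iff \text{there exist } w_n \in C_0(\mathbb{R},X) \text{ with } T_n w_n = u \text{ and } \|w_n\|_\infty \to 0,
\]
whose forward direction picks $w_n = (T_n^{\mathcal{Q}})^{-1}u$ (so $\|w_n\|_\infty \le Ne^{-\nu n}\|u\|_\infty$) and whose converse follows from splitting $w_n$ and observing that the stable component of $u$ satisfies $\|u^s\|\le N\|\mathcal{P}\|\,e^{-\nu n}\|w_n\|_\infty \to 0$. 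Here the standing hypothesis $\mu\in\mathcal{A}_{\,\infty}$ enters crucially: it ensures that $\varphi_n$ is a homeomorphism of $\mathbb{R}$ with continuous inverse $\psi_n(s)=\mu^{-1}(\mu(s)+n)$. Given $u\in E^u$ and $w_n$ as above, I set
\[
\tilde{w}_n := M_{\beta\circ\psi_n} w_n \in C_0(\mathbb{R},X),
\]
which satisfies $\|\tilde{w}_n\|_\infty \le \|\beta\|_\infty \|w_n\|_\infty \to 0$, and compute
\[
(T_n \tilde{w}_n)(s)=U(s,\varphi_n(s))\beta(\psi_n(\varphi_n(s)))w_n(\varphi_n(s))=\beta(s)(T_n w_n)(s)=(\beta u)(s),
\]
using $\psi_n\circ\varphi_n=\mathrm{id}$. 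Thus $T_n\tilde{w}_n = M_\beta u$ with $\|\tilde{w}_n\|_\infty \to 0$, so $M_\beta u \in E^u$.

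With both invariances in hand, decomposing $u = \mathcal{P}u + (I-\mathcal{P})u$ and multiplying by $\beta$ gives $\beta u = \beta \mathcal{P}u + \beta(I-\mathcal{P})u$ with $\beta\mathcal{P}u\in E^s$ and $\beta(I-\mathcal{P})u\in E^u$; the uniqueness of the hyperbolic splitting concludes $\mathcal{P}(\beta u)=\beta\mathcal{P}u$. The main obstacle is the unstable invariance, because $M_\beta$ does not commute with any individual $T_t$: the trick of pre-composing $\beta$ with $\psi_n=\varphi_n^{-1}$ is exactly the device that lets $T_n\tilde{w}_n$ hit $\beta u$ on the nose rather than $M_{\beta\circ\varphi_n} u$, and this device is available only because $\mu\in\mathcal{A}_{\,\infty}$ makes each transition map $\varphi_n$ a global bijection of $\mathbb{R}$.
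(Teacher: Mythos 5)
Your proof is correct and follows essentially the same route as the paper: both arguments rest on the commutation identity $\beta\,T_t = T_t\,(\beta\circ\psi_t)$ (the paper's $\beta_\mu$ is exactly your $\beta\circ\psi_t$), the characterization of $\mathrm{Range}(\mathcal{P})$ as the functions with forward orbits decaying to zero, and the decomposition $\beta u=\beta\mathcal{P}u+\beta\mathcal{Q}u$. The only cosmetic difference is that the paper disposes of the unstable part by estimating $\|\mathcal{P}(\beta\mathcal{Q}u)\|_\infty\le N^2e^{-2\nu t}\|\beta\|_\infty\|u\|_\infty$ directly, whereas you package the same estimate as an invariance of $\ker\mathcal{P}$ via a backward-decay characterization.
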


\begin{proof}
We first remark that
$$\mathrm{Range}(\mathcal{P})=\left\{  u\in C_0(\mathbb{R},X):\; \lim\limits_{t\to\infty}T_{t}u=0\right\}.$$
Therefore, if $\beta\in C_{b}(\mathbb{R})$ and  $u\in C_0(\mathbb{R},X)$, then $\|T_t \mathcal{P} u\|_\infty \to 0$ as $t\to \infty$ and
$$\left\Vert T_{t}(\beta\mathcal{P}u)(s)\right\Vert=| \beta(\mu^{-1}(\mu(s)-t))|\, \|T_t \mathcal{P} u (s)\| \leq \|\beta\|_\infty\, \|T_t \mathcal{P} u \|_\infty.$$
This yields that $\beta \mathcal{P}u\in \mathrm{Range}(\mathcal{P})$ and $\mathcal{P}(\beta \mathcal{P}u)=\beta \mathcal{P}u.$
On the other hand, we have
\begin{align*}
\| \mathcal{P}(\beta \mathcal{Q}u)\|_{\infty}
&=\left\| \mathcal{P} \left(\beta T_t \left(T_t^{\mathcal{Q}}\right)^{-1} \mathcal{Q}u \right)\right\|_{\infty}\\
&=\left\| \mathcal{P} \left( T_t \beta_\mu \left(T_t^{\mathcal{Q}}\right)^{-1} \mathcal{Q}u \right)\right\|_{\infty}\\
&=\left\|  T_t \mathcal{P} \left(\beta_\mu \left(T_t^{\mathcal{Q}}\right)^{-1} \mathcal{Q}u \right)\right\|_{\infty},
\end{align*}
for some function $\beta_\mu\in C_{b}(\mathbb{R})$, which depends on $\beta$ and $\mu$, with $\|\beta_\mu\|_\infty\leq \|\beta\|_\infty$.
Let us estimate
$$
\| \mathcal{P}(\beta \mathcal{Q}u)\|_{\infty}\leq N e^{-\nu t} \| \beta \|_\infty \, \| \left(T_t^{\mathcal{Q}}\right)^{-1} \mathcal{Q}u \|_\infty
\leq N^2 e^{-2\nu t} \| \beta \|_\infty\, \|u \|_\infty\to 0,$$
as $t\to\infty$.
Thus, $\mathcal{P}(\beta \mathcal{Q}u)=0$ and, consequently,
$$\mathcal{P}(\beta u)=\mathcal{P}\left(\beta\mathcal{P}u\right)+\mathcal{P}\left(\beta\mathcal{Q}u\right)=\beta\mathcal{P}u.$$
\end{proof}

An important consequence of the above result is that
\begin{equation}\label{eq.proj}
\mathcal{P}u(s)=P(s)u(s), \;  u\in C_0(\mathbb{R},X), \,s\in\mathbb{R},
\end{equation}
for some bounded, strongly continuous, projection-valued function $P:\mathbb{R}\to \mathcal{B}(X)$ (see \cite[Proposition 9.13]{Eng}).

The following definition generalizes the classical notion of (uniform) exponential dichotomy of an evolution family.

\begin{definition}\rm\label{def.ged}
The evolution family $\mathcal{U}$  is said to admits a  \emph{$\mu$-exponential dichotomy} if there exist projections $P(t):X\to X$, $t\in \mathbb{R}$, compatible with $\mathcal{U}$ and there exist constants $\nu>0$, $N\geq 1$ such that
\begin{equation*}\label{eq.e.d.mu}
\| U(t,s)P(s)\| \leq Ne^{-\nu (\mu(t)-\mu(s)) }  \text{ and }  \| U_Q(s,t)Q(t)\| \leq Ne^{-\nu (\mu(t)-\mu(s))},
\end{equation*}
for all $t\geq s$.

Notice that  $\|P(t)\|\leq N$ for every $t\in\mathbb{R}$. Furthermore, as in \cite[Lemma 4.2]{Min1}, one may prove that the mapping $t\mapsto P(t)$ is strongly continuous and thus $P(\cdot)\in C_b(\mathbb{R},\mathcal{B}_s(X))$, the space of all bounded and continuous functions from $\mathbb{R}$ with values in $\mathcal{B}(X)$ endowed with the topology of strong convergence.

If $\mu$ is continuously differentiable with $\rho(s)=\mu^\prime(s)$,  the above inequalities are replaced by
\begin{equation*}\label{eq.e.d.ro}
\| U(t,s)P(s)\| \leq N e^{-\nu\int_s^t \rho(\xi)\,d\xi}  \text{ and }  \| U_Q(s,t)Q(t)\| \leq Ne^{-\nu\int_s^t \rho(\xi)\,d\xi},
\end{equation*}
for $t\geq s$, and thus we extend the concept of a generalized exponential dichotomy (firstly introduced, to our knowledge, by Martin in \cite[Definition 4]{Martin}) to arbitrary evolution families (see also \cite{Jiang,Jiang2,Mul}). In particular, if $\mu(s)=s$, we step over the usual exponential behavior. On the other hand, letting
\begin{equation}\label{polynom}
\mu(s)=\mathrm{sign}(s)\ln(1+|s|),\, s\in\mathbb{R},
\end{equation}
we get the polynomial behavior on the real line.
\end{definition}

The following question is natural and leads to the main goal of this section:
what is the connection between the existence of  $\mu$-exponential dichotomy of an evolution family and the hyperbolicity of the corresponding generalized evolution semigroup?

Before giving an answer to this question, we consider a simple example.

\begin{example}
Let $\mu$ be the polynomial growth rate defined by \eqref{polynom}. Obviously, the evolution family
$$U(t,s)(x_1,x_2)=(e^{\mu(s)-\mu(t)}x_1,\,e^{\mu(t)-\mu(s)}x_2), \text{ for } t\geq s \text{ and } (x_1,x_2)\in\mathbb{R}^2,$$
admits a $\mu$-exponential dichotomy.
On the other hand,
the generalized evolution semigroup corresponding to $\mathcal{U}$, given by
$$T_tu(s)=(e^{-t}u_1(\mu^{-1}(\mu(s)-t)),\,e^t u_2(\mu^{-1}(\mu(s)-t))),$$
for $t\geq 0$, $u=(u_1,u_2)\in C_0(\mathbb{R}, \mathbb{R}^2)$,  $s\in\mathbb{R}$, is hyperbolic.
Furthermore, one may show that the
generator $G$ of $\left\{  T_{t}\right\}_{t\geq0}$ is
$$G(u_1,u_2)(s)=\left(-u_1(s)-\frac{1}{\mu^\prime(s)} u_1^\prime(s),u_2(s)-\frac{1}{\mu^\prime(s)} u_2^\prime(s)\right),   \text{ $s\in\mathbb{R}$},$$
with its domain
$D(G)=\{u\in C_0(\mathbb{R}, \mathbb{R}^2)\cap C^1(\mathbb{R}, \mathbb{R}^2):\, \lim\limits_{s\to\pm\infty} u^\prime(s)=0\}$.
\end{example}

The following  result   characterizes the concept of $\mu$-exponential dichotomy of an evolution family in terms of spectral properties of the generator of the corresponding generalized evolution semigroup.

\begin{theorem}[Dichotomy Theorem]\label{th3}
The following statements are equivalent:
\begin{enumerate}
\item[(a)] $\mathcal{U}$ admits a $\mu$-exponential dichotomy;
\item[(b)] $\left\{T_{t}\right\}_{t\geq0}$ is hyperbolic;
\item[(c)] $\sigma(G)\cap i\mathbb{R}=\emptyset;$
\item[(d)] $G$ is invertible and, in particular, if $\mu$ is continuously differentiable, then its inverse is given by the formula
\begin{equation}\label{eq.G.fomula}
(G^{-1}f)(t)=-\int_\mathbb{R} \mu^\prime (\xi) \Gamma(t,\xi)f(\xi)\, d\xi,
\end{equation}
for all  $f\in C_0(\mathbb{R}, X) \text{ and } t\in\mathbb{R}$, where $\Gamma$ is the Green function associated to $\mathcal{U}$.
\end{enumerate}
\end{theorem}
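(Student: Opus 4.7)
The plan is to reduce everything to the classical Theorem \ref{th.ued} via the similarity established in subsection \ref{p.similar}. Recall that $T_t=\mathcal{F}^{-1}S_t\mathcal{F}$, where $\{S_t\}_{t\geq 0}$ is the classical evolution semigroup on $C_0(\mathbb{R},X)$ associated to the exponentially bounded evolution family $\mathcal{V}=\{V(t,s)\}_{t\geq s}$ with $V(t,s)=U(\mu^{-1}(t),\mu^{-1}(s))$, and $\mathcal{F}u=u\circ\mu^{-1}$. Similarity of $C_0$-semigroups preserves spectra of both the semigroup operators and the generator, so $\sigma(T_t)=\sigma(S_t)$ and $\sigma(G)=\sigma(A)$, where $A$ is the generator of $\{S_t\}_{t\geq 0}$. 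Consequently, the equivalences (b) $\Leftrightarrow$ (c) $\Leftrightarrow$ (d) are immediate from the corresponding equivalences in Theorem \ref{th.ued} applied to $\mathcal{V}$, $\{S_t\}_{t\geq 0}$, and $A$ (together with Corollary \ref{th2}, which already gives (b) $\Leftrightarrow$ (d)).

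The core of the proof is therefore (a) $\Leftrightarrow$ the exponential dichotomy of $\mathcal{V}$. First I would observe that since $\mu:\mathbb{R}\to\mathbb{R}$ is a homeomorphism, a family of projections $\{P(t)\}_{t\in\mathbb{R}}$ compatible with $\mathcal{U}$ corresponds bijectively to a family $\{\widetilde{P}(t)\}_{t\in\mathbb{R}}$ compatible with $\mathcal{V}$ via the relabeling
\begin{equation*}
\widetilde{P}(t)=P(\mu^{-1}(t)),\qquad P(t)=\widetilde{P}(\mu(t)).
\end{equation*}
A direct check using $V(t,s)=U(\mu^{-1}(t),\mu^{-1}(s))$ gives $\widetilde{P}(t)V(t,s)=V(t,s)\widetilde{P}(s)$ and the analogous invertibility of the restriction to the unstable subspace. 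The dichotomy estimates transport as well: $\|V(t,s)\widetilde{P}(s)\|=\|U(\mu^{-1}(t),\mu^{-1}(s))P(\mu^{-1}(s))\|$ is bounded by $Ne^{-\nu(\mu(\mu^{-1}(t))-\mu(\mu^{-1}(s)))}=Ne^{-\nu(t-s)}$ precisely when $\mathcal{U}$ enjoys the $\mu$-exponential estimate, and analogously for the unstable direction. Hence (a) holds for $\mathcal{U}$ if and only if $\mathcal{V}$ admits an ordinary exponential dichotomy, which by Theorem \ref{th.ued} is equivalent to the hyperbolicity of $\{S_t\}_{t\geq 0}$, hence (by similarity) equivalent to (b).

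For the formula in (d) under the additional assumption that $\mu$ is continuously differentiable, I would start from the classical Green function representation
\begin{equation*}
(A^{-1}g)(t)=-\int_{\mathbb{R}}\widetilde{\Gamma}(t,s)g(s)\,ds,\qquad g\in C_0(\mathbb{R},X),
\end{equation*}
where $\widetilde{\Gamma}$ is built from $\mathcal{V}$ and $\widetilde{P}$. Since $G=\mathcal{F}^{-1}A\mathcal{F}$, we have $G^{-1}=\mathcal{F}^{-1}A^{-1}\mathcal{F}$, so for $f\in C_0(\mathbb{R},X)$,
\begin{equation*}
(G^{-1}f)(t)=(A^{-1}(f\circ\mu^{-1}))(\mu(t))=-\int_{\mathbb{R}}\widetilde{\Gamma}(\mu(t),s)f(\mu^{-1}(s))\,ds.
\end{equation*}
The change of variables $s=\mu(\xi)$, $ds=\mu'(\xi)\,d\xi$, combined with the identity $\widetilde{\Gamma}(\mu(t),\mu(\xi))=\Gamma(t,\xi)$ (which follows at once from $V(\mu(t),\mu(\xi))=U(t,\xi)$ and the relation between $\widetilde{P}$ and $P$), yields exactly formula \eqref{eq.G.fomula}.

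The main obstacle is essentially bookkeeping: one must verify that all objects transport consistently under $\mu$, namely the compatibility of the projections, the two dichotomy estimates, and the domain/range of the restriction to the unstable fibre. Once this is accomplished, everything else follows mechanically from Theorem \ref{th.ued}, the similarity formula of subsection \ref{p.similar}, and a single change of variable in the Green function.
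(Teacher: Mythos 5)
Your proposal is correct and follows essentially the same route as the paper: transporting the dichotomy of $\mathcal{U}$ to an ordinary exponential dichotomy of $\mathcal{V}$ via the reparametrized projections $P(\mu^{-1}(t))$, invoking Theorem \ref{th.ued} together with the similarity $T_t=\mathcal{F}^{-1}S_t\mathcal{F}$ for the equivalences, and obtaining \eqref{eq.G.fomula} from the classical Green function representation of $A^{-1}$ by the substitution $s=\mu(\xi)$. The only cosmetic difference is that the paper verifies the formula by showing directly that the candidate $u(t)=-\int_\mathbb{R}\mu^\prime(\xi)\Gamma(t,\xi)f(\xi)\,d\xi$ equals $A^{-1}(f\circ\mu^{-1})\circ\mu$, which is the same computation you perform, read in the opposite direction.
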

\begin{proof}
We emphasize that  $\mathcal{U}$ admits a $\mu$-exponential dichotomy with respect to  projections $P(t)$ if and only if the evolution family $\mathcal{V}$ defined in Paragraph \ref{p.similar} admits a (uniform) exponential dichotomy with respect to
$$P_{\mu}(t)=P(\mu^{-1}(t)), \; t\in\mathbb{R}.$$
Therefore, the equivalences (a) $\Leftrightarrow$ (b) $\Leftrightarrow$ (c) hold from
Paragraph \ref{p.similar} and Theorem \ref{th.ued}. It remains to prove formula \eqref{eq.G.fomula}.
For this pick $f\in C_0(\mathbb{R}, X)$ and set
\begin{equation*}\label{eq.Green}
u(t)=-\int_\mathbb{R} \mu^\prime (\xi) \Gamma(t,\xi)f(\xi)\, d\xi,\;t\in\mathbb{R}.
\end{equation*}
By Theorem \ref{th.ued} we get
\begin{align*}
A^{-1}(f\circ \mu^{-1})(\mu(t))&=
-\int_{-\infty}^{\mu(t)} V(\mu(t),\tau)P_{\mu}(\tau)f(\mu^{-1}(\tau))\,d\tau\\
&\qquad +\int_{\mu(t)}^{+\infty}  V_{Q_\mu}(\mu(t),\tau)Q_\mu(\tau)f(\mu^{-1}(\tau)) \,d\tau\\
&=-\int_{-\infty}^{\mu(t)} U(t,\mu^{-1}(\tau))P(\mu^{-1}(\tau))f(\mu^{-1}(\tau))\,d\tau\\
&\qquad +\int_{\mu(t)}^{+\infty}  U_{Q}(t,\mu^{-1}(\tau))Q(\mu^{-1}(\tau))f(\mu^{-1}(\tau)) \,d\tau\\
&=-\int_{-\infty}^{t}\mu^\prime(\xi) U(t,\xi)P(\xi)f(\xi)\,d\xi\\
&\qquad +\int_{t}^{+\infty} \mu^\prime(\xi) U_{Q}(t,\xi)Q(\xi)f(\xi) \,d\xi\\
&=u(t), \, t\in\mathbb{R}.
\end{align*}
This yields
$$A^{-1}(f\circ \mu^{-1})\circ \mu=u.$$
Thus, $u\circ \mu^{-1}\in D(A)$ and, equivalently, $u\in D(G)$. On the other hand, by  \eqref{DAG} we have
$G^{-1}f=A^{-1}(f\circ \mu^{-1})\circ \mu$, which proves the desired formula.
\end{proof}

We emphasize that formula \eqref{eq.proj} provides the relation between the projection $\mathcal{P}$, that corresponds to the hyperbolic generalized evolution semigroup, and the dichotomy projections $P(t)$  given by  Definition \ref{def.ged}.

\begin{example}
For any fixed continuously differentiable function $\mu\in\mathcal{A}_{\, \infty}$  and for any projection $P\in\mathcal{B}(X)$, we consider the evolution family
$$U(t,s)=e^{\mu(s)-\mu(t)}P+e^{\mu(t)-\mu(s)}Q,$$
where $Q=\mathrm{Id}-P$.

Even if one can easily observe that $\mathcal{U}$ admits a $\mu$-exponential dichotomy, the purpose of this example is to show that the generator $G$ corresponding to the generalized evolution semigroup associated to $\mathcal{U}$ is invertible, without explicitly constructing it. Pick $f\in C_0(\mathbb{R}, X)$, $f\neq 0$, and set
\begin{equation}\label{eq.Green-ex}
u(t)=\int_{-\infty}^t \mu^\prime (\xi) U(t,\xi)Pf(\xi)\, d\xi-\int_t^{\infty} \mu^\prime (\xi) U(t,\xi)Qf(\xi)\, d\xi,\;t\in\mathbb{R}.
\end{equation}
One may  check that the integrals in \eqref{eq.Green-ex} are convergent, thus $u$ is well-defined.  Let $\varepsilon >0$.  Since $\lim\limits_{t\to \pm \infty} f(t)=0$, there exists $\delta_1>0$ such that
$$\| f(t) \| <\frac{\varepsilon}{4}, \text{ for every }t\in(-\infty,-\delta_1)\cup (\delta_1,+\infty).$$
On the other hand, from the convergence of the integral $\int_0^{\infty} e^{-\xi} d\xi$, there exists $\delta_2>0$ such that
$$ \int_{t'}^{t''} e^{-\xi} d\xi<\frac{\varepsilon}{4  \|f\|_{\infty}}, \text{ for every } t', t''>\delta_2.$$
Set $\delta=\max\{ \delta_1,\delta_2\}$ and choose $t>\mu^{-1}(\mu(\delta)+\delta)$, which implies $t>\delta$.
Now, we successively have
\begin{align*}
\| u(t) \|
&\leq  \int_{-\infty}^{t} \mu^\prime (\xi) e^{-(\mu(t)-\mu(\xi))}\| f(\xi) \|\, d\xi+  \int_{t}^{\infty} \mu^\prime (\xi) e^{-(\mu(\xi)-\mu(t))}\| f(\xi) \|\, d\xi\\
&\leq  \int_{-\infty}^{-\delta} \mu^\prime (\xi) e^{-(\mu(t)-\mu(\xi))}\| f(\xi) \|\, d\xi+ \int_{-\delta}^{\delta} \mu^\prime (\xi) e^{-(\mu(t)-\mu(\xi))}\| f(\xi) \|\, d\xi\\
&\qquad+\int_{\delta}^t \mu^\prime (\xi) e^{-(\mu(t)-\mu(\xi))}\| f(\xi) \|\, d\xi+\frac{\varepsilon}{4} \int_{t}^{\infty} \mu^\prime (\xi) e^{-(\mu(\xi)-\mu(t))}\, d\xi\\
&\leq \frac{\varepsilon}{4} \int_{-\infty}^{-\delta} \mu^\prime (\xi) e^{-(\mu(t)-\mu(\xi))}\, d\xi+ \int_{-\delta}^\delta \mu^\prime (\xi) e^{-(\mu(t)-\mu(\xi))}\, d\xi\,\|f\|_{\infty}\\
&\qquad + \frac{\varepsilon}{4} \int_{\delta}^t \mu^\prime (\xi) e^{-(\mu(t)-\mu(\xi))}\, d\xi+\frac{\varepsilon}{4}\\
&\leq\frac{\varepsilon}{2} \int_{-\infty}^{t} \mu^\prime (\xi) e^{-(\mu(t)-\mu(\xi))}\, d\xi+\int_{\mu(t)-\mu(\delta)}^{\mu(t)-\mu(-\delta)} e^{-\tau}\, d\tau \, \|f\|_{\infty}+\frac{\varepsilon}{4}<\varepsilon,
\end{align*}
which shows that  $\lim\limits_{t\to +\infty} u(t)=0$. Similarly, one may prove that $\lim\limits_{t\to -\infty} u(t)=0$ and thus $u\in C_0(\mathbb{R}, X)$.
Furthermore,
for $t\geq s$ we get
$$u(t)-U(t,s)u(s)=\int_s^t \mu^\prime (\xi) U(t,\xi)f(\xi)\,d\xi.$$
From Proposition \ref{lem.int}, this yields that $u\in D(G)$ and $Gu=-f$. Therefore, $G$ is invertible and the evolution family  $\mathcal{U}$ admits a $\mu$-exponential dichotomy.
\end{example}

\section*{Comments}
Throughout this paper, all the results we obtain stand for  $C_0(\mathbb{R},X)$. Evidently they can be extended  similarly to the general context of nonuniform behavior, by formally replacing the function space $C_0(\mathbb{R},X)$ with the super-space $C_*$ introduced in \cite{BPV1}. However we think that such an approach would significantly complicate computations, without adding any essential merit for our main purposes.

On the other hand, we decided to develop our considerations in the case of  evolution families on the whole real line. We believe that the half-line situation can also be treated similarly, with only a few modification (e.g., \cite{Min1}), but this is not our purpose.

\section*{Open problems}
Authors address the following open questions:
What happens if the mappings $s\mapsto\varphi_{t}(s)$, $t>0$,
are bounded, i.e. $\ell\in\mathbb{R}$?
Can we define in this case an evolution semigroup on some appropriate Banach function space? If the answer is positive, how can we apply it in the study of asymptotic behavior of the underlying evolution family?


\end{document}